\definecolor{purple}{rgb}{0.65, 0, 1}
\definecolor{green}{rgb}{0, 1, 0}
\definecolor{orange}{rgb}{1,.5,0}
\newenvironment{green}{\par\color{green}}{\par}
\newenvironment{orange}{\par\color{orange}}{\par}
\newcommand{\be}{\begin{equation}}
\newcommand{\ee}{\end{equation}}
\newcommand{\bb}{\bigskip}
\newcommand{\sm}{\smallskip}
\newcommand{\la}{\label}
\newcommand{\rf}[1]{(\ref{#1})}
\newcommand{\om}{\omega}
\newcommand{\Om}{\Omega}
\newcommand{\R}{{\mathbf R}}
\newcommand{\pul}{\frac{1}{2}}
\newtheorem{theorem}{Theorem}
\newtheorem{lemma}[theorem]{Lemma}
\newtheorem{prop}[theorem]{Proposition}
\newenvironment{remark}{\noindent \textbf{Remark}.}{\hfill $\square$}
\newcommand{\curl}{\mathop{\rm curl}\nolimits}
\newcommand{\PV}{\mathop{\rm P.V.}\nolimits}
\newcommand{\re}{\mathrm{e}}
\providecommand{\abs}[1]{\lvert#1\rvert}
\providecommand{\norm}[1]{\lVert#1\rVert}
\numberwithin{equation}{section}
\numberwithin{theorem}{section}
\numberwithin{figure}{subsection}
\numberwithin{table}{subsection}
\title[Blowup of a 1D Model for 3D Euler]{On the Finite-Time Blowup of a 1D Model for the 3D Axisymmetric Euler Equations}
\author{K. Choi}
\thanks{Department of
Mathematics, University of Wisconsin, Madison; email:
kchoi@math.wisc.edu}
\author{T. Hou}
\thanks{California Institute of
Technology; email: hou@cms.caltech.edu}
\author{A. Kiselev}
\thanks{Rice University; email: kiselev@rice.edu
}
\author{G. Luo}
\thanks{California Institute of
Technology; email: gluo@caltech.edu;
present affiliation: City University of Hong Kong
 }
\author{V. Sverak}
\thanks{University of Minnesota; email: sverak@math.umn.edu}
\author{Y. Yao}
\thanks{University of Wisconsin, Madison; email:
yaoyao@math.wisc.edu}
\subjclass[2010]{Primary 76B03, 35Q35, 35B44; Secondary 35Q31}
\begin{document}
\maketitle


\section*{Abstract}
In connection with the recent proposal for possible singularity formation at the boundary for solutions of 3d axi-symmetric incompressible Euler's equations~\citep{lh2014a}, we study models for the dynamics at the boundary and show that they exhibit a finite-time blow-up from smooth data.

\section{Introduction}\label{sec_intro}

In this paper we study a 1d model introduced in~\cite{lh2014a}(also see \cite{lh2014b}) in connection with incompressible Euler equations. The study of 1d models for hydrodynamical equations has a long history, going back to the works of~\cite{burgers} and~\cite{hopf}. A slightly different type of 1d models, originating, as far as we know, with the paper of \cite{clm1985}, were introduced to illustrate effects of vortex stretching. The equations studied in this paper are related to these works and we will link some of the models to the boundary behavior of fluid flows. The link is not perfect, but is still useful.

Important modifications of the original model of \cite{clm1985} were proposed by \cite{degreg1990, degreg1996} and somewhat differently motivated work of~\cite{ccf2005}. All these works include modeling of two important features of incompressible flow:\\
(i) the vorticity transport (either as a scalar, as in 2d Euler, or as a vector field, as in 3d Euler, with the vector field transport also covering the vortex stretching) and\\
(ii) the Biot-Savart law, which expresses the velocity field which transports the vorticity in terms of the vorticity itself.\\
 In the present paper the ``vorticity" $\om$ will be mainly considered as a scalar function $\om(x,t)$ of time and a 1d variable $x$, either on the real line $\R$, or on the 1d circle ${\bf S}^1$, the latter case corresponding to periodic boundary conditions.
The Biot-Savart law will be taken as
\be\la{bslaw1}
u_x=H\om\,,
\ee
where $H$ is the Hilbert transform. In this setting one can consider the equation
\be\la{2dem}
\om_t+u\om_x=0\,,
\ee
which has many properties similar to the 2d Euler.
We will see below that a natural (approximate) interpretation of this model is in terms of the dynamics at the boundary for the full 2d Euler flows in smooth domains with boundaries.

Model~\rf{2dem},~\rf{bslaw1} is not studied in the works mentioned above. However, one can prove that it shares many properties with the 2d Euler equation, including the global existence and uniqueness of solutions (in suitable classes) for $L^\infty$ initial data, as in \cite{yudovich1963}, and the double exponential growth of $\om_x$ for certain smooth data, similar to~\cite{kv2014}. These topics will be addressed elsewhere. Here we will focus on singularity formation for natural extensions of the ``2d~Euler model", which in some sense take us from 2d Euler to 3d axi-symmetric Euler with swirl or 2d Boussinesq.

 In~\cite{ccf2005} the Biot-Savart law is taken as
\be\la{ccfbs}
u=H\om\,.
\ee
With this Biot-Savart law, equation~\rf{2dem} is more akin to the surface quasi-geostrophic (SQG) which was proposed as a model for 3d Euler in~\cite{cmt1994}, under a slightly different terminology. While the singularity formation for the SQG equation remains open, it was shown in~\cite{ccf2005} that the 1d model can develop a singularity from smooth initial data in finite time.

In~\cite{degreg1990, degreg1996}, the Biot-Savart law is taken as in~\rf{bslaw1}, but the vorticity is considered as a vector field $\om(x,t) \frac{\partial}{\partial x}$, and transported by the velocity field $u(x,t)$ as such (similarly to what we have for 3d Euler), with the transport equation given by
\be\la{dgeq}
\om_t+u\om_x=u_x\om\,.
\ee
We can also write it in terms of the usual Lie bracket for vector fields as
\be\la{dglie}
\om_t+[u,\om]=0\,,
\ee
 just as the 3d Euler. It appears that the question of global existence of smooth solutions for smooth initial data for the 1d model~\rf{dglie} with~\rf{bslaw1} is open.
A generalization of the model \eqref{dgeq} was studied in \citet{osw2008} and also in \citet{cc2010}, see the table below.

As pointed out in~\cite{degreg1990}, the model considered in~\cite{clm1985} can be written as
\be\la{clmeq}
\om_t=u_x\om\,,
\ee
 with the Biot-Savart law~\rf{bslaw1}, which is of course the same as~\rf{dgeq} without the ``transport term" (in the scalar sense) $u\om_x$. As shown by~\cite{clm1985}, this model can blow up in finite time from smooth data.

In this paper we will mostly study the model obtained from~\rf{bslaw1} and~\rf{2dem} by adding an additional variable $\theta=\theta(x,t)$ (which can be thought of as temperature in the 2d Boussinesq context or the square of the swirl component $u^\theta$ of the velocity field in the 3d axi-symmetric case)
and considering equations
\begin{subequations}\label{hl}
\begin{align}
\om_t+u\om_x & = \theta_x \,\,,\\
\theta_t + u\theta_x & = 0\,.
\end{align}
\end{subequations}

 A discussion of connections to 3d axi-symmetric Euler flows with swirl, or 2d Boussinesq flows is included in the next section. Aspects of the model have been discussed in~\cite{hl2013} and we will refer to the model as the HL-model. One of our main results is that this model can exhibit finite-time blow-up from smooth initial data.

In addition to this model, we will also discuss its variant where the Biot-Savart law~\rf{bslaw1} is replaced by a simplified version which still captures important features of~\rf{bslaw1} in the situation when $\om(x,t)$ is odd in $x$:
\be\la{bscky}
u(x)\sim-x\int_x^\infty \frac{\om(y)}{y}\,dy\,.
\ee
This model was studied by~\citep{cky2015}, and we will refer to it as the CKY-model.

Our proof of finite time blow up for the CKY-model relies on the local aspect still present in the
simplified Biot-Savart law - namely, that $(u(x)/x)_x = \omega(x)/x$. This observation allows us to
construct an entropy-like functional to capture the blow up. The proof of finite time blow up
for the full HL-model is subtler, as the Biot-Savart law is now fully nonlocal. The proof is based, at heart,
on a non-trivial and not readily evident sign definite estimate for certain quadratic forms associated
with the solution. These estimates are novel and intriguing to us, as they provide a blueprint of what one might
have to look for in order to understand the full three dimensional dynamics.

\sm
We summarize the above discussion in a table.

\sm
{\small
\begin{center}
    \begin{tabular}{ | p{5cm} | l | l | p{4cm} |}
    \hline
    model & Biot-Savart law & dynamical equations & regularity of the solutions of the model \\ \hline
    2d Euler analogy & $u_x=H\om$ & $\,\,\om_t+u\om_x=0$ & unique global solutions
     \\
     \hline
    \hbox{\cite{degreg1990},} \hbox{3d Euler analogy}& $u_x=H\om$ & $\,\,\om_t+u\om_x=u_x\om$ & global existence and regularity unknown\\
    \hline
    \hbox{\cite{clm1985},} \hbox{analogy of vortex stretching} without transport term& $u_x=H\om$ & $\,\,\om_t=u_x\om$ & \hbox{finite-time blow-up} from smooth data \\
     \hline
      { \hbox{\citet{osw2008},} \hbox{a generalized model }} &
    $u_x=H\om$ &
    $
    \begin{array}{rcl}
    \om_t+au\om_x & =& u_x\om
        \end{array}
    $

     & {\footnotesize finite-time~blowup~when \hbox{$a< 0$}, \cite{cc2010}}\\
    \hline
    \hbox{\cite{ccf2005},} \hbox{SQG analogy} &
    $u=H\om$ & $\,\,\om_t+u\om_x=0$ & \hbox{finite-time blow-up} from smooth data\\
    \hline
    HL-model, \hbox{\cite{hl2013},}
    \hbox{2d Boussinesq}/\hbox{3d axi-symmetric} Euler analogy &
    $u_x=H\om$ &
    $
    \begin{array}{rcl}
    \om_t+u\om_x & =& \theta_x\\
    \theta_t+u\theta_x & = & 0
    \end{array}
    $
& finite time blow-up from smooth data, the main new result of this paper\\
     \hline
     CKY-model, \hbox{\cite{cky2015},} simplified HL-model &
     $u(x)=-x\int_x^\infty \frac{\om(y)}{y}\,dy
     $
     &
    $
    \begin{array}{rcl}
    \om_t+u\om_x & =& \theta_x\\
    \theta_t+u\theta_x & = & 0
    \end{array}
    $
    &
    \hbox{finite-time blow-up} from smooth data\\
    \hline
\end{tabular}
\end{center}
}


\section{Motivation for the models}\label{motivation}

In~\cite{lh2014a} the authors study 3d axi-symmetric flow of incompressible Euler's equation with roughly the following initial configuration:

\begin{center}
\includegraphics[width=100mm]{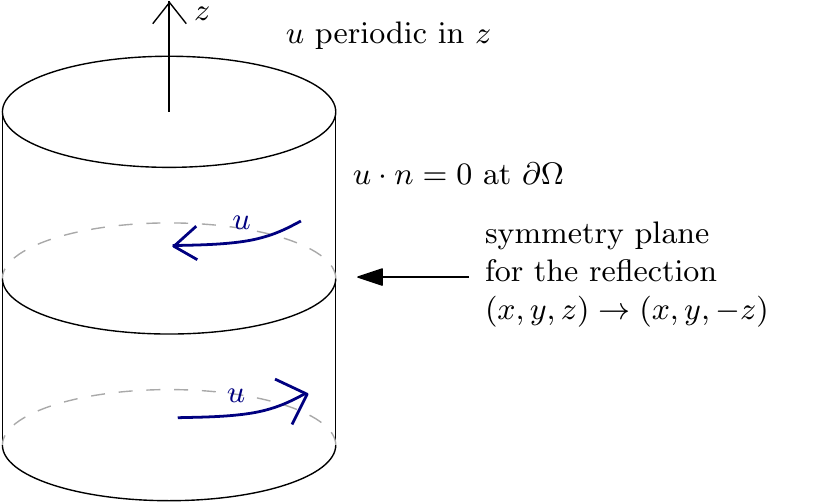}
\end{center}

In accordance with principles governing the so-called ``secondary flows",~\citep{prandtl}, see also the paper by \cite{einstein}, the initial condition leads to the following (schematic) picture in the $xz$--plane, in which we also indicate the point where a conceivable
finite-time singularity (or at least an extremely strong growth of vorticity) is observed numerically. The singularity was not predicted by the classics, who were mostly concerned with slightly viscous flows. In the presence of viscosity the scenario of singularity formation discussed here can be ruled out, due to well-known regularity criteria for the Navier-Stokes equations near boundaries, such as \cite{seregin} or \cite{gkt2006}.

\begin{center}
\includegraphics[width=100mm]{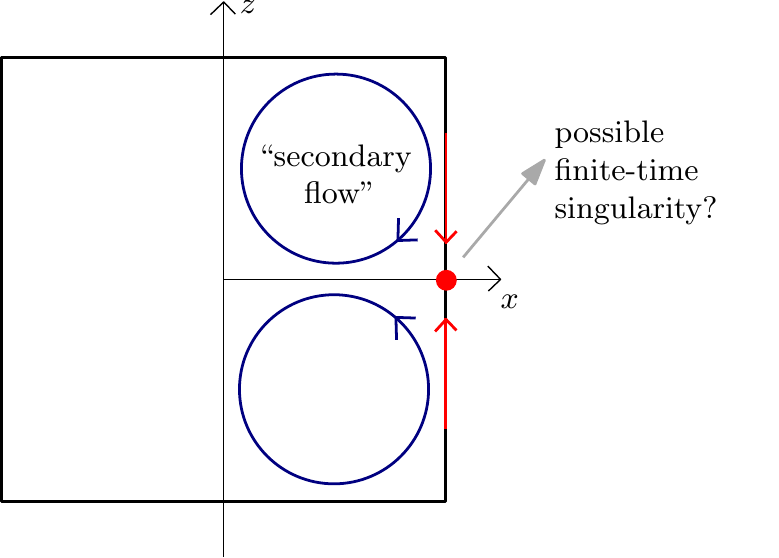}
\end{center}

One of the standard forms of the axi-symmetric Euler equations in the usual cylindrical coordinates $(r,\theta, z)$ is
\begin{subequations}\label{axi-sym}
\begin{align}
\left(\frac{\omega^\theta}{r}\right)_t+
u^r\left(\frac{\omega^\theta}{r}\right)_{r} + u^z\left(\frac{\omega^\theta}{r}\right)_{z} & =\left(\frac{(u^\theta)^2}{r^2}\right)_{z}\\
(ru^\theta)_{t}+u^r(ru^\theta)_{r}+u^z(ru^\theta)_{z} & = 0\,,
\end{align}
\end{subequations}
with the understanding that $u^r, u^z$ are given from $\om^\theta$ via the Biot-Savart law (which follows from the equations $\curl u =\om,\,{\rm div}\,u=0$, together with the boundary condition $u \cdot n=0$ and suitable decay at $\infty$ or, respectively, periodicity in $z$). See e.g. \cite{mb2002} for more details.
We will link these equations to the system~\rf{hl}.
\bb

A somewhat similar scenario can be considered for the 2d Boussinesq system in a half-space $\Om=\{(x,y)\in\R\times (0,\infty)\}$ (or in a flat half-cylinder $\Om={\bf S}^1\times (0,\infty)$), which we will write in the vorticity form:
\begin{subequations}\label{boussinesq}
\begin{align}
\om_t+u^x\om_x+u^y \om_y & = \theta_{x}\\
\theta_t+u^x \theta_x + u^y \theta_y & = 0\,.
\end{align}
\end{subequations}
 Here $u=(u^x,u^y)$ is obtained from $\omega$ by the usual Biot-Savart law (which follows from the equations $\curl u=\om, \,\,{\rm div}\, u =0\,$, the boundary condition $u\cdot n=0$ at $\partial\Omega$ and/or suitable decay at $\infty$), and $\theta$ represents the fluid temperature.

It is well-known that this system has properties similar to the axi-symmetric Euler, at least away from the rotation axis, with $\theta$ playing the role of the square of the swirl component $u^\theta$ (in standard cylindrical coordinates) of the velocity field $u$. For the purpose of the comparison with the axi-symmetric flow, the last picture should be rotated by $\pi/2$, after which it resembles the picture relevant for~\rf{boussinesq}:

\begin{center}
\includegraphics[width=90mm]{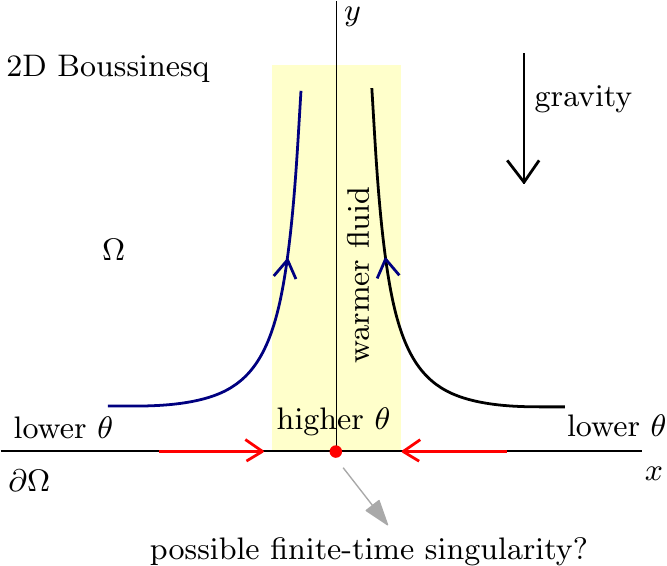}
\end{center}

We see that both in the 3d axi-symmetric case and the 2d Boussinesq the best chance for possible singularity formation seems to be at the points of symmetry at the boundary, and this is where we will focus our attention.

Note that~\rf{boussinesq} restricts well to the boundary $\{(x,y),\,y=0\}$, which can be identified with the real line.
At the boundary we can write with slight abuse of notation $u\sim (u(x),0)$, and we see that the system would close (as a system on the real line) if we could express the boundary velocity $u(x)$ via the restriction of $\om$ to the boundary.

Similar considerations apply to~\rf{axi-sym} \, when we restrict the equations to the boundary of $\Om$ given by, say, $r=1$. The restricted equations will look the same as the restricted Boussinesq equations, after the identifications $x\leftrightarrow z,\,\,\theta \leftrightarrow (u^\theta)^2,\,\,\omega\leftrightarrow\omega^\theta,\, u^x\leftrightarrow u^z$, with the understanding that the Biot-Savart is not quite identical, although the leading terms at the boundary are similar.

To obtain a closed 1d model, we need a way to model the Biot-Savart law. The most natural way to do so is probably to assume that $\om$ is (nearly) constant in $y$ close to the boundary, and discount the influence from the rest of the fluid. Let us assume the thickness of the layer where $\om$ is constant in $y$ is $a>0$. We adopt a convention that positive vorticity generates clock-wise rotation.\footnote{This is more convenient for our purposes here than the more usual convention with the opposite sign.} Under these assumptions it is easily seen that a reasonable 1d model of the Biot-Savart law (in the case when $\Om$ is the upper half-plane) is given by
\be\la{biot-sav}
u(x)=\int_{-\infty}^{\infty} \tilde k(x-y)\om(y)\,dy\,,
\ee
with the kernel $\tilde k$ determined by
\be\la{kernel1}
\tilde k(x_1)= \int_0^a \left. \frac{\partial}{\partial x_2}\right|_{x_2=0} G((x_1,x_2),(0,y_2))\,dy_2\,,
\ee
where
\be\la{greenf}
G(z,w) = \frac{1}{2\pi} \log |z-w|-\frac{1}{2\pi} \log |z-w^*|\,,\qquad w^*=(w_1,-w_2)\,,
\ee
is the Green function of the Laplacian in the upper half-plane. A simple calculation gives
\be\la{pre-kernel}
\tilde k(x)=\frac{1}{\pi} \log \frac{|x|}{\sqrt{x^2+a^2}}\,.
\ee
 One could work with this kernel, but we will simplify it to
\be\la{kernel}
k(x)=\frac{1}{\pi}\log |x|\,.
\ee
This kernel has the same singularity at $0$ and gives exactly the Biot-Savart law~\rf{bslaw1}. The somewhat unnatural behavior of $k$ for large $x$ will be alleviated by our symmetry assumptions.
We see that 1d models discussed in the previous section can be interpreted, to some degree, as the boundary dynamics for 2d flows, or 3d axi-symmetric flows with swirl. A similar (although somewhat less straightforward) calculation can be carried out in the axi-symmetric case, leading again to kernel~\rf{kernel}, as can be expected from the fact that the leading order terms in the corresponding elliptic operators are the same. \\ 

We acknowledge that the assumptions made in the above derivation of the model do not perfectly capture the situation near the boundary. For example, in the axi-symmetric case with the boundary at $r=1$, instead of being roughly constant in $r$ for $r$ slightly below $1$, the solution $\omega^\theta$ obtained from numerical simulations was observed to exhibit nontrivial variations \citep{lh2014a}. Nonetheless, according to the preliminary numerical evidence reported in \citet[][Section 5]{lh2014b}, the solution of the 1d model \eqref{hl} on ${\bf S}^1$ appears to develop a singularity in finite-time \emph{in much the same way} as the full simulation of the axi-symmetric flow. This supports the relevance of the 1d model for the finite-time blowup of the full 3D problem.

\subsection{Statement of the Main Results}\label{ssec_m_res}

The HL-model \eqref{hl} has the scaling invariance
\begin{equation}
  \omega(x,t) \to \omega(\lambda x,t),\qquad u(x,t) \to \frac{1}{\lambda}\, u(\lambda x,t),\qquad \theta(x,t) \to \frac{1}{\lambda}\, \theta(\lambda x,t),
  \label{invariance}
\end{equation}
and is invariant under the translation $\theta \to \theta + \text{const}$. This suggests that the critical space for the local well-posedness of the model should be
\begin{displaymath}
  (\theta_{0x},\omega_{0}) \in L^{\infty} \times L^{\infty}\qquad \text{or perhaps}\qquad (\theta_{0x},\omega_{0}) \in \dot H^{\frac{1}{2}} \times \dot H^{\frac{1}{2}}.
\end{displaymath}
It is not clear if the equation is locally well-posed in these spaces.\footnote{The answer might also depend on technical details of the definitions. In this context we refer the reader to the recent works of~\cite{bl2013, bl2014} on the ill-posedness of the Euler equation in critical spaces.} From the well-posedness proofs of the Euler equation in the slightly subcritical cases it should be clear that our system is locally well-posed for
\begin{displaymath}
  (\theta_{0x},\omega_{0}) \in H^{s}\times H^s\,,
\end{displaymath}
for any $s>\frac{1}{2}$ (see e.g. \cite{bl2013} for discussion and further references on subcritical local existence results).
We will not study these relatively standard issues here, as our interest is in the breakdown of smooth solutions.
Our main result is the following:

\begin{theorem}\label{thm_1d_blowup}
Both in the periodic case ($x\in {\bf S}^1$) and in the real-line case ($x\in \R$) with compactly supported initial data $(\theta_{0x},\om_0)$, one can find smooth initial conditions
such that the HL-model cannot have a smooth global solution starting from those data. 
\end{theorem}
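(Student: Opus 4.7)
The plan is to work within the symmetry class $\omega(-x,t)=-\omega(x,t)$, $\theta(-x,t)=\theta(x,t)$, which is preserved by the dynamics (since $H$ sends odd to even) and pins the origin as a stagnation point: $u(0,t)=0$ for all $t$. Because $\theta$ is transported, this forces $\theta(0,t)\equiv\theta_{0}(0)$. I would choose smooth initial data in this class with $\omega_{0}\ge 0$ on $\{x\ge 0\}$ and $\theta_{0x}>0$ on $(0,\delta)$ for some $\delta>0$; compact support or periodicity is handled by a standard cutoff that still respects the symmetry. Heuristically, positive $\omega$ on $x>0$ produces, via the Biot--Savart law, a velocity that drags trajectories toward $x=0$; the pinning of $\theta$ at $0$ combined with its transport then sharpens its profile at $0^{+}$, producing a large $\theta_{x}$ that feeds back into the growth of $\omega$.

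To make this quantitative, I would track a weighted functional of the form
\begin{equation*}
F(t) \;=\; \int_{0}^{L}\omega(x,t)\,\psi(x)\,dx,
\end{equation*}
with a singular weight $\psi$ (most likely $\psi(x)\sim x^{-1}$ or $\psi(x)\sim-\log x$ near $0$, truncated away from $L$). Differentiating in $t$, substituting the evolution equations, and integrating by parts, the transport piece yields
\begin{equation*}
-\int_{0}^{L}(H\omega)\,\omega\,\psi\,dx \;+\; (\text{weight and boundary terms}),
\end{equation*}
while the forcing contributes $\int_{0}^{L}\theta_{x}\psi\,dx$, which stays bounded below by a positive constant on the lifespan thanks to preservation of $\theta$ along characteristics together with the initial sign of $\theta_{0x}$. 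The goal is then to establish a sign-definite lower bound for the quadratic form above in $\omega$, yielding a Riccati-type inequality
\begin{equation*}
\dot F(t) \;\ge\; c_{1}\,F(t)^{2} - C_{1},
\end{equation*}
from which finite-time blow-up of $F$ follows in standard fashion; since $F$ is controlled by smooth norms of $\omega$, this contradicts global smoothness.

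The main obstacle, and the point the introduction flags as the novel content, is precisely the sign-definite estimate on the nonlocal quadratic form $\int(H\omega)\omega\psi\,dx$. In the CKY simplification, the identity $(u/x)_{x}=\omega/x$ makes the Biot--Savart interaction essentially local and the analogous estimate is transparent; here $H\omega(x)$ depends on $\omega(y)$ for all $y$, and the far field could a priori destroy coercivity. The oddness of $\omega$ should be essential: restricted to $y>0$, the symmetrized kernel becomes $\frac{1}{x-y}-\frac{1}{x+y}=\frac{2y}{x^{2}-y^{2}}$, which decays faster off the diagonal and, paired against $\omega\ge 0$ together with an appropriate $\psi$, has a chance of producing a coercive bilinear form whose quadratic lower bound in $F$ comes from a Cauchy--Schwarz-type rearrangement. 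In the periodic case the kernel becomes $\tfrac{1}{2}\cot\frac{x-y}{2}$, with a smooth correction to the real-line kernel that can be absorbed into the constants; the same symmetry reduction about a point of ${\bf S}^{1}$ and the same Lyapunov argument then go through, yielding the theorem in both settings.
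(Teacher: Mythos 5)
There is a genuine gap, and it sits exactly at the step you flag as "the goal": the coercive bound $-\int (H\omega)\,\omega\,\psi\,dx \gtrsim F(t)^2$ for $F=\int_0^L\omega\,\psi\,dx$ with $\psi\sim x^{-1}$. What can actually be proved about this quadratic form (and what the paper proves, in Lemma~\ref{l2} and Lemma~\ref{lmm_wvy_pos}) is only its \emph{nonnegativity}, $\int_a^{L/2}\omega\,[u\cot(\mu x)]_x\,dx\ge 0$ for $\omega\ge 0$, not a lower bound by $c\,\bigl(\int\omega\cot(\mu x)\,dx\bigr)^2$. Indeed such a coercivity estimate fails as a functional inequality: for $\omega\ge 0$ consisting of $n$ identical (up to the scaling \eqref{invariance}) bumps placed at widely separated scales, the cross-interactions in the form are negligible (the kernel derivative decays off-diagonal), so the form grows like $n$, while $\bigl(\int\omega/x\,dx\bigr)^2$ grows like $n^2$; hence no uniform constant $c_1$ exists, and one would need a dynamical argument excluding such configurations, which your proposal does not supply. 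Without the $F^2$ term, your differential inequality only yields $\dot F\ge \int\theta_x\psi\,dx\ge c>0$, i.e.\ linear growth of $F$, which is perfectly compatible with global smoothness. The paper in fact states explicitly (see the discussion around \eqref{quant1}--\eqref{quant11}) that a proof based on the $\omega$-weighted quantity $\int_0^{x_0}\omega(x,t)/x\,dx$ was not found.

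The mechanism the paper uses is structurally different: the monitored functional is the $\theta$-weighted integral $I(t)=\int_0^{L/2}\theta\cot(\mu x)\,dx$, and the Riccati-type nonlinearity is generated over \emph{two} time derivatives rather than one. One has $\dot I\ge J:=\frac{2}{\pi}\int\theta\omega\cot(\mu x)\,dx$ using the kernel bound $K\ge 2$ for $y>x$ (Lemma~\ref{lmm_kernel}), and then $\dot J=T_1+T_2$, where the sign-definite quadratic-form estimate (Lemma~\ref{lmm_wvy_pos}) is used only to discard $T_1\ge 0$, while the quadratic growth comes from $T_2=\frac{\mu}{\pi}\int\theta^2\csc^2(\mu x)\,dx\ge \frac{2}{L^2}I^2$ by Cauchy--Schwarz --- i.e.\ it is quadratic in $\theta$ (through the forcing term $\theta_x$ in the $\omega$-equation), not in $\omega$. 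This gives $\ddot{\;}$-type dynamics $\dot I\ge J$, $\dot J\ge c_0 I^2$, whose blow-up then contradicts the BKM-type criterion \eqref{bkmq}. So while your symmetry class, sign conditions, and the identification of the nonlocal sign-definite estimate as the crux are all aligned with the paper, the single-derivative Riccati inequality in an $\omega$-functional cannot be closed as proposed; you would need to switch to the $\theta$-weighted functional and the two-tier ODE structure (or find a genuinely new coercivity mechanism that the dynamics enforces).
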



Our proof of this statement is indirect, using an argument by contradiction, somewhat similar in spirit to the classical proofs of the blow-up in the non-linear Schr\"odinger equation \citep{glassey1977, talanov1971}, based on the virial identity. We do not get any detailed information about the nature of blow-up. For the periodic HL-model, the main quantity used in our proof is essentially
\be\la{quant1}
\int_0^{x_0} \frac{\theta(x,t)}{x}\,dx\,.
\ee
In view of the methods used in~\cite{ccf2005} the quantity
\be\la{quant11}
\int_0^{x_0} \frac{\om(x,t)}{x}\,dx
\ee
might look more natural, but we did not find a proof based on this quantity.
A key component of our proof are monotonicity properties of the Biot-Savart kernel, which are important in Lemma~\ref{l2} and Lemma~\ref{l4}.
Under some assumptions, a good quantity for a relatively simple proof of the blow-up for the CKY-model is an entropy-type integral defined in~\rf{e1}.

Some information about the nature of the blow-up can be obtained from Beale-Kato-Majda (BKM) type criteria~\citep{bkm1984}, which one can adapt to our case. For example, for a smooth solution (with appropriate decay, in case of $\R$) defined on a time interval $[0,T)$ any of the following conditions imply that the solution can be smoothly continued beyond $T$:
\be\la{bkmq}
\int_0^T ||u_x(\cdot,t)||_{L^\infty}\,dt < +\infty\,\,,\qquad
\int_0^T ||\theta_x(\cdot, t)||_{L^\infty}\,dt < +\infty\,.
\ee
See, for example, \cite{danchin2013} for the proof of an analogous result for the 2D inviscid Boussinesq system which can be adapted to our case in a straightforward way.

The BKM condition for 3d Euler is of course
\be\la{bkm3d}
\int||\om(\cdot,t)||_{L^\infty}\,dt<\infty\,,\qquad \om=(\om_1,\om_2,\om_3)=\curl u\,.
\ee
For the 3d axi-symmetric flow considered in \cite{lh2014a} this becomes
\be\la{bkmas}
\int_0^T \left( ||\om^\theta(\cdot,t)||_{L^\infty}+
||(u^\theta)_{z}(\cdot,t)||_{L^\infty}+
||\tfrac{1}{r} (ru^\theta)_{r}(\cdot, t)||_{L^\infty}\right)\,dt<\,+\infty\,,
\ee
which, when applied in the context of the 1d model is a stronger condition than either of the conditions in~\rf{bkmq}. It is natural to ask whether in the context of the HL-model
\be\la{bkmopen}
\int_0^T ||\om(\cdot, t)||_{L^\infty}\,dt<+\infty
\ee
 is a good BKM-type condition. It is possible to prove that the quantity $\int_0^T ||\om(\cdot, t)||_{L^\infty}\,dt$ cannot stay finite if the main quantity used in our proof becomes infinite,
 but one could conceivably have some loss of smoothness while both our main quantity in the proof and $\int_0^T ||\om(\cdot, t)||_{L^\infty}\,dt$ remain finite. This is unlikely, but, strictly speaking, our proofs do not rule that out.
 A similar question also seems to be open for the 2d Boussinesq system~\rf{boussinesq}: it is not clear whether the condition
 \be\la{bkmbouss}
 \int_0^T||\om(\cdot,t)||_{L^\infty}\,dt<\infty
 \ee
 is a good BKM-type condition for the system.

 Another natural question one can ask about the nature of blow up is whether the analog of the kinetic energy $\int_{{\bf S}^1}|u|^2\,dx$ stays finite at blow up time.
Note that for the 2D Boussinesq system \eqref{boussinesq} there is a natural energy
\begin{equation}\label{2dbenergy} E = \frac12 \int_{\Omega} |u(x,y)|^2\,dx dy - \int_{\Omega} y\theta(x,y)\,dx dy \end{equation}
which is conserved for reasonable classes of solutions. This conservation implies that the kinetic part of the energy should remain finite for the solutions of the 2D Boussinesq system
at the blow up time (if any). In the 1D model we do not expect an exact conservation of some quantity analogous to \eqref{2dbenergy} since it only models a boundary layer and not a closed system.
However, if the kinetic energy became infinite at blow up time, it would clearly indicate a divergence between the 1D model and the original equation. In addition, finite time loss
of regularity in solutions to Euler equations with infinite energy is well documented in the literature (see \cite{Const} for examples and further references).
In the Appendix, we will provide a simple argument showing that for a class of initial data leading to finite time blow up in the HL-model, the kinetic energy stays finite.
Moreover, more is true: any $L^p$ norm $\|u\|_{L^p}$ with $p < \infty$ and the BMO norm $\|u\|_{BMO}$ remain finite at blow up time. The key to these results
is the control of the $L^1$ norm of the vorticity. 

\section{The CKY and HL models on the real line}


In this section, we discuss the CKY and HL models on the real line. Our primary reason is to outline in the most transparent
setting the ideas that will be later used in the rigorous and more technical proof of the finite time blow up for the HL model in the periodic case.
The initial data for which we will prove the blow up will not be in the most general class. We will focus on the main underlying
structure and ideas that will be fully developed for the periodic case in a later section - but will also look more technical there.
Our main goal is the proof of finite time blow up for the HL-model in the periodic setting, and this section can be thought of as a preview of the ideas and connections employed in this proof in a situation without technical distractions which one has to deal with in the periodic case.

To start with, let us study more carefully the Biot-Savart laws for both models.
 Motivated by the structure of the Biot-Savart laws, a change of variable is introduced so that the velocities for both models become convolutions. Using the new variables, we will
 prove the finite time blow up of the CKY model using an ``entropy'' functional, and then prove the finite time blow up for the HL model using another natural functional.

\subsection{Comparison of velocities}
\label{comparison}
Let us first look at the velocity field $u$ in the HL model and the CKY model respectively. \\

For the HL model on $\R$, we use the the following representations
$$
 u_{\text{HL}}(x):=\frac{1}{\pi}\int_\R \omega(y)\log|x-y|dy
\quad \mbox{ and }\quad
\partial_xu_{\text{HL}}(x)=P.V.\frac{1}{\pi}\int_\R\frac{\omega(y)}{x-y}dy=(H\omega)(x).$$
 We will study the situation
when $\om$ is odd and $\theta$ is even, i.\ e.\
\be\la{b4}
\om(-x,t)=-\om(x,t),\quad \theta(-x,t)=\theta(x,t)\,.
\ee
In this situation one can restrict the attention to $x\in(0,\infty)$,
and the expression for $u$ in terms of $\om$ can be written as
\be\la{b5}
\frac{u_{\text{HL}}(x)}{x}=-\frac{1}{\pi}\int_0^\infty \frac{y}{x}\log\left |\frac{x+y}{x-y}\right|\,\,\om(y)\,\frac{dy}{y}\,.
\ee
We note that the last integral is of the form
\be\la{b6}
\int_0^\infty M\left(\frac{x}{y}\right) \om(y) \,\frac{dy}{y}\,,
\ee
which represents convolution in the multiplicative group $\R_+$ taken with respect to the natural invariant measure $\frac{dy}{y}$.
The kernel $M$ is given by
\be
M(s)= \frac{1}{s}\log \left| \frac{s+1}{s-1}\right |\,\,,\qquad s>0\,.
\ee
We will use the following decomposition of $M$:
\be\la{b7}
M(s)= \pul\left(\frac{1}{s}+s\right)\log\left|\frac{s+1}{s-1}\right|+
\pul\left(\frac{1}{s}-s\right)\log\left|\frac{s+1}{s-1}\right|=M_{\rm sym}(s)+M_a(s)\,.
\ee
We have
\be\la{b8}
M_{\rm sym}\left(\frac{1}{s}\right)=M_{\rm sym}(s), \qquad
M_{a}\left(\frac{1}{s}\right)=-M_a(s)\,.
\ee
We collect some properties of the function $M$ in the following lemma. The proof is elementary and will be omitted here; for the periodic case the corresponding lemma is
a more technical Lemma \ref{lmm_kernel}, for which we will provide a complete proof.
\begin{lemma}\label{l1} The function $M$ has the following properties:
\begin{itemize}
\item[(i)] $M$ is increasing on $(0,1)$ and decreasing on $(1,\infty)$.
\item[(ii)] $\lim _{s\to 0_+} M(s)=2\,,\quad \lim_{s\to 0_+}M'(s)=0$\,.
\item[(iii)] $M_a$ is continuous and decreasing in $(0,\infty)$,
with $\lim_{s\to 0_+}M_a(s)=1$.
\item[(iv)] $M(s)=\frac{2}{s^2}+O\left(\frac{1}{s^3}\right)\,,\quad s\to\infty.$
\end{itemize}
\end{lemma}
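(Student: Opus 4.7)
The plan is to handle each of the four items by elementary calculus. For the limit claims (ii), (iv), and the behavior at $0$ in (iii), I will use the Taylor expansion $\log\frac{1+s}{1-s} = 2\sum_{k\ge 0} s^{2k+1}/(2k+1)$, applied either at $s=0$ directly or after the substitution $s\mapsto 1/s$. For the two monotonicity claims (i) and (iii), I will differentiate, reduce each inequality to a positivity statement for an auxiliary one-variable function, and verify the latter by checking that it vanishes at a convenient base point and has a sign-definite derivative.

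For (ii), substituting the expansion into $M(s) = s^{-1}\log\frac{1+s}{1-s}$ yields $M(s) = 2 + \tfrac{2}{3}s^2 + O(s^4)$ near $0$, giving both $M(0^+)=2$ and $M'(0^+)=0$ at once. For (iv), the same series with $s\mapsto 1/s$ gives $\log\left|\frac{s+1}{s-1}\right| = 2/s + 2/(3s^3) + O(1/s^5)$, whence $M(s) = 2/s^2 + O(1/s^3)$. For the monotonicity in (i), differentiation yields $s^2 M'(s) = \frac{2s}{1-s^2} - \log\left|\frac{s+1}{s-1}\right|$. On $(1,\infty)$ both terms on the right are negative, so $M'<0$ immediately. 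On $(0,1)$ I set $g(s) = \frac{2s}{1-s^2} - \log\frac{1+s}{1-s}$; then $g(0)=0$ and a short computation gives $g'(s) = \tfrac{4s^2}{(1-s^2)^2}>0$, so $g>0$ on $(0,1)$, forcing $M'>0$ there.

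For (iii), the Taylor expansion of $M_a(s) = \tfrac{1-s^2}{2s}\log\frac{1+s}{1-s}$ at $0$ reads $M_a(s) = 1 + O(s^2)$, giving $M_a(0^+)=1$. Continuity on $(0,\infty)$ is immediate away from $s=1$, and at $s=1$ the factor $1-s^2$ absorbs the logarithmic divergence of $\log\left|\frac{s+1}{s-1}\right|$, so $M_a(s)\to 0$. For the monotonicity, differentiating on $(0,1)$ gives $M_a'(s) = \tfrac{1}{s} - \tfrac{1+s^2}{2s^2}\log\frac{1+s}{1-s}$, which is negative exactly when $h(s) := \log\frac{1+s}{1-s} - \tfrac{2s}{1+s^2} > 0$; since $h(0)=0$ and $h'(s) = \tfrac{8s^2}{(1-s^2)(1+s^2)^2}>0$, this holds on $(0,1)$. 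To extend the sign to $(1,\infty)$ I invoke the symmetry $M_a(1/s) = -M_a(s)$ from~\rf{b8}, which differentiates to $M_a'(1/s) = s^2 M_a'(s)$, so $M_a'$ is negative on $(1,\infty)$ as well. The only mildly delicate points are the continuity at $s=1$ (handled by the explicit cancellation above) and the choice of the auxiliary functions $g$ and $h$; with these in hand, the rest of the proof is a direct verification.
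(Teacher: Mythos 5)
Your proof is correct: the identities you use all check out ($s^{2}M'(s)=\frac{2s}{1-s^{2}}-\log\left|\frac{s+1}{s-1}\right|$, $g'(s)=\frac{4s^{2}}{(1-s^{2})^{2}}$, $M_a'(s)=\frac{1}{s}-\frac{1+s^{2}}{2s^{2}}\log\frac{1+s}{1-s}$, $h'(s)=\frac{8s^{2}}{(1-s^{2})(1+s^{2})^{2}}$), the reflection identity $M_a'(1/s)=s^{2}M_a'(s)$ from \rf{b8} legitimately transfers the sign of $M_a'$ to $(1,\infty)$, and since you also verify continuity of $M_a$ at $s=1$, monotonicity on the two half-intervals glues to monotonicity on all of $(0,\infty)$ (this last one-line gluing remark is worth stating explicitly). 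Note, however, that the paper gives no proof of Lemma~\ref{l1}: it is declared elementary and omitted, with the reader referred to the periodic analogue, Lemma~\ref{lmm_kernel}, for a complete argument. So your writeup is a genuine filling-in rather than a reproduction. Your use of the expansion $\log\frac{1+s}{1-s}=2\sum_{k\ge 0}\frac{s^{2k+1}}{2k+1}$ for (ii) and (iv) is exactly the device used in the proof of Lemma~\ref{lmm_kernel}, where it yields the bounds $K\ge 2$ and $K\ge 2s^{2}$; for the monotonicity claims you instead differentiate and reduce to the auxiliary functions $g$ and $h$, whereas the route closest in spirit to the paper's periodic proof would be to read monotonicity directly off the (convergent, hence termwise differentiable) series: for $0<s<1$ one has $M(s)=2\sum_{k\ge 0}\frac{s^{2k}}{2k+1}$, manifestly increasing, and $M_a(s)=1-\sum_{k\ge 1}\frac{2s^{2k}}{(2k-1)(2k+1)}$, manifestly decreasing with limit $1$ at $0_+$, while for $s>1$ one has $M(s)=2\sum_{k\ge 0}\frac{s^{-2k-2}}{2k+1}$, manifestly decreasing; combined with the antisymmetry of $M_a$ this gives (i)--(iv) with slightly less computation. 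Either approach is fine; what your calculus route buys is independence from any series manipulation beyond the limits in (ii)--(iv), at the cost of the two auxiliary-function verifications.
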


\sm
Note that the velocity of the CKY model can be written in a form similar to \eqref{b6} (if we assume $\omega$ is
compactly supported in $(0,1)$). The Biot-Savart law here is given by
\be\la{b10}
\frac{u_{\text{CKY}}(x)}{x}=-\frac{1}{\pi} \int_0^\infty \tilde M\left(\frac{x}{y}\right)\,\om(y) \frac{dy}{y}\,,
\ee
with
\be\la{b9}
\tilde M(s) =2\chi_{[0,1]}(s)\,,
\ee
where we use the notation $\chi_A$ for the characteristic function of the set $A$. In some sense, we can think of $\tilde M$ as the most natural crudest approximation of $M$.
Moreover, note that the velocity for the HL model is ``stronger'' than for the CKY model in the following sense:
\be\la{b11}
0\le \tilde M \le M,\\\quad \int_0^\infty (M(s)-\tilde M(s))\frac{ds}{s} <\infty\,.
\ee
As a result, when $\omega(y)\geq 0$ for $y\geq 0$, we have $u_{\text{HL}}(x) \leq u_{\text{CKY}} \leq 0$.
This suggests that the finite time blow up for the CKY model should imply the finite time blow up for the HL model.
 However, there is no straightforward comparison principle that would directly substantiate such a claim. The Biot-Savart law
of the CKY model is simpler and can be made local after differentiation. This makes the CKY model easier to deal with.
For the HL model more sophisticated arguments are needed, due to the combination of its non-local and non-linear nature.


\subsection{A change of coordinates}
In view of formulae from the last subsection and the scaling symmetry~\rf{invariance}, it seems natural to work with the variables $\xi, U(\xi), \Om(\xi),\Theta(\xi)$
defined by
\be\la{n1}
x=e^{-\xi}\,,\quad U(\xi)=-\frac{u(x)}{x}\,,\quad \Omega( \xi)=\om(x)\,,\quad \Theta(\xi)=-\theta(x)+\theta(0)\,.
\ee
In addition, we will use the notation
\be\la{n2}
\rho(\xi)=\Theta_\xi(\xi)\,.
\ee
We note that
\be\la{n2b}
\Theta(\xi)=-\int_\xi^\infty \rho(\eta)\,d\eta\,.
\ee
In the coordinates~\rf{n1} the HL model and the CKY model can both be written as
\be\la{n3}
\begin{array}{rcl}
\Om_t+U\Om_\xi & = & e^\xi \Theta_\xi\,, \\
\Theta_t+U\Theta_\xi & = & 0\,,
\end{array}
\ee
where $U$ is given by $U_{\text{HL}}$ and $U_{\text{CKY}}$ respectively. For the HL model, its Biot-Savart law \eqref{bslaw1}
becomes \footnote{We use the usual notation $f*g$ for convolution.}
\be\la{n4}
U_{\text{HL}}=K*\Om\,,
\ee
with
\be\la{n5}
K(\xi)=\frac{1}{\pi}\,M(e^{-\xi})\,.
\ee
 By Lemma~\ref{l1} the function $K$ is increasing on $(-\infty,0)$ and is decreasing on $(0,\infty)$ with \\$\lim_{\xi\to\infty} K(\xi)=\frac{2}{\pi}$.
 For the CKY-model, the Biot-Savart law becomes
 \be\la{n6}
 U_{\text{CKY}}=\tilde K*\Om\,,
 \ee
where
\be\la{n7}
\tilde K= \frac{2}{\pi}\chi_{[0,\infty)}\,.
\ee
Hence in the CKY-model we have
\be\la{n8}
U_{\text{CKY}}(\xi)=\frac{2}{\pi}\int_{-\infty}^\xi \Om(\xi')\,d\xi'\,\,.
\ee
The decomposition~\rf{b7} corresponds in the new coordinates to
\be\la{n9}
K(\xi)=K_{\rm sym}(\xi)+K_a(\xi)\,,
\ee
with
\be\la{n10}
K_{\rm sym}(\xi)=\pul\left(K(\xi)+K(-\xi)\right),\qquad K_a(\xi)=\pul\left(K(\xi)-K(-\xi)\right)\,.
\ee
The function $K_a(\xi)$ is continious and increasing, with limits $\pm \frac{1}{\pi}$ at $\pm\infty$, respectively. The function $K_{\rm sym}$ is everywhere above $\frac{1}{\pi}$, with the difference
$K_{\rm sym} -\frac{1}{\pi}$ being integrable.

\subsection{Blow up in the CKY-model -- the entropy functional approach}

Let us assume that
\be\la{a1}
\hbox{$\rho_0(\xi)=\Theta_\xi(\xi,0)$ is non-negative, compactly supported, with $\int_\R\rho_0(\xi)\,d\xi=1$\,.}
\ee
In fact, we will assume (without loss of generality) that the support of $\rho$ is in $(0,\infty)$.
This can be achieved for many compactly supported initial data $\theta_0(x)$, it just needs to be non-decreasing,
to be constant near zero, and to vanish for $x \geq 1.$ Observe that $\Theta_0(\xi)$ in this case is non-decreasing
and approaches zero as $\xi \rightarrow \infty,$ so in particular $\Theta_0(\xi) \leq 0$ for all $\xi.$
The normalization $\int_\R\rho_0\,d\xi=1$ is
not important, but the condition $\rho_{0}\ge 0$ will play an important role in our argument.
It can be relaxed to the condition that $\rho_0(\xi)$ has a ``bump" of this form, but we
will not pursue this here.

We also assume that the initial
data $\Omega_0(\xi)$ is non-negative, compactly supported.

The evolution equation for $\rho$ is
\be\la{a2}
\rho_t+(U\rho)_\xi=0\,.
\ee
 This is the usual equation of continuity and with our assumptions this means that $\rho$ can be considered as a density transported by $U$.

 Let us consider the entropy of $\rho$, defined by
 \be\la{e1}
 I=I(t)=\int_\R -\log\rho\,\,\rho\,d\xi\,.
 \ee
In this subsection, we use an approach different from \cite{cky2015} to show the finite time blow up of the CKY model. Namely, we will prove that the entropy functional $I(t)$ blows up in finite time, implying that the solution must have a finite time singularity as well. 

 \begin{lemma}\label{l3} Assume that a smooth positive density $\rho$ (of unit total mass) is
 compactly supported in $[0,\infty)$. Then
 \be\la{e2}
 \int_\R\xi\,\rho(\xi)\,d\xi\ge \exp\,(I-1)\,.
 \ee
 \end{lemma}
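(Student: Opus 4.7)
The plan is to recognize the claimed inequality as a disguised form of the classical fact that the exponential distribution maximizes entropy among probability densities on $[0,\infty)$ with a prescribed first moment. More precisely, I will show that
\[
I(t) \;\le\; 1 + \log m,\qquad m := \int_\R \xi \,\rho(\xi)\,d\xi,
\]
which is exactly the statement to be proved after exponentiating.

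First I would set up the comparison density. Since $\rho$ is non-negative with unit mass, compactly supported in $[0,\infty)$, and not a Dirac mass at $0$ (being smooth and positive), the quantity $m$ is finite and strictly positive. Define the reference probability density
\[
q(\xi) \;=\; \tfrac{1}{m}\,e^{-\xi/m}\,\chi_{[0,\infty)}(\xi),
\]
which is chosen precisely because it is the maximizer of entropy over densities on $[0,\infty)$ with mean $m$. Note $\int_0^\infty q\,d\xi = 1$ and $\int_0^\infty \xi\,q(\xi)\,d\xi = m$ by direct computation.

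The core step is the Gibbs inequality (equivalently, non-negativity of relative entropy):
\[
\int_\R \rho \log\!\tfrac{\rho}{q}\,d\xi \;\ge\; 0.
\]
This follows from the elementary pointwise bound $\log t \le t-1$ applied with $t = q/\rho$ on the support of $\rho$, which yields $\int \rho \log(q/\rho)\,d\xi \le \int q\,d\xi - \int \rho\,d\xi = 0$. Rearranging gives $-\int \rho\log\rho\,d\xi \le -\int \rho\log q\,d\xi$.

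Finally I would evaluate the right-hand side explicitly. Since $\supp\rho \subset [0,\infty)$, where $\log q(\xi) = -\log m - \xi/m$, one computes
\[
-\int_\R \rho\log q\,d\xi \;=\; \log m \int_\R \rho\,d\xi + \frac{1}{m}\int_\R \xi\,\rho\,d\xi \;=\; \log m + 1.
\]
Combining with the Gibbs inequality gives $I \le \log m + 1$, i.e.\ $m \ge \exp(I-1)$, as claimed. There is no substantive obstacle here beyond identifying the right comparison density; the only mild point of care is verifying $m \in (0,\infty)$ and that the entropy integral makes sense, both of which follow from $\rho$ being smooth, positive, compactly supported in $[0,\infty)$ with unit mass.
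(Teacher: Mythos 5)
Your proof is correct and is essentially the paper's own argument in slightly different clothing: the paper writes $\rho=f\,e^{-\xi/J}$ with $J=\int\xi\rho\,d\xi$ and applies Jensen's inequality to $\exp\bigl(\int-\log f\,\rho\,d\xi\bigr)$, which is the same comparison of $\rho$ with the exponential density of matching mean that you carry out via the Gibbs inequality $\log t\le t-1$. No substantive difference in route or generality.
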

\begin{proof}
 Define $f(\cdot)$ by $\rho(\xi)=f(\xi)e^{-\xi/J}$ where $J:= \int_\R\xi\,\rho(\xi)\,d\xi>0$.
 Observe
 $$
 I= \int_\R -\log f\,\,\rho\,d\xi\, +\frac{1}{J}\int_\R \xi\,\rho\,d\xi\,
  =\int_\R -\log f\,\,\rho\,d\xi\, + 1.
 $$
 So, by Jensen's inequality, we obtain
 $$e^{I-1}
 =\exp \biggl( \int_\R -\log f\,\,\rho\,d\xi \biggr)
 \leq \int_\R \,\frac{\rho}{f}\,d\xi\,= J.
 $$
 \vspace{-0.4cm}
\end{proof}

 When $\rho$ evolves by the equation of continuity~\rf{a2}, we have
 \be\la{e3}
 \frac{d}{dt} I = \int_\R U_\xi\rho \,d\xi\,.
 \ee
 In the CKY-model we have $U_\xi=\frac{2}{\pi}\Om$, and hence
 \be\la{e4}
 \frac{d}{dt} I =\frac{2}{\pi}\int_\R\Om\rho\,d\xi\,.
 \ee
 The equation
 \be\la{e5}
 \Om_t+U\Om_{\xi}=e^\xi\rho\,,
 \ee
together with the equation of continuity for $\rho$ give
\be\la{e6}
\frac{d}{dt}\int \Om\rho = \int e^\xi \rho\rho\,d\xi=\int e^{\xi+\log\rho}\rho\,d\xi.
\ee
By Jensen's inequality the last integral is greater or equal to
\be\la{e7}
\exp\left(\int \xi\rho\,d\xi-I\right)\,.
\ee
Estimating the integral in the last expression from below by Lemma~\ref{l3}, we finally obtain:
\be\la{e8}
\ddot I \ge \frac{2}{\pi} e^{(e^{I-1}-I)}\,.
\ee
In addition, the above calculations show that $I$ is strictly increasing and convex as a function of $t$. It is now easy to see that $I$ must always become infinite in finite time.

In the original coordinates, we have
\[ I(t) = \int_0^\infty \theta_x \log(x \theta_x)\,dx. \]
It is not difficult to see, using the blow up criteria \eqref{bkmq}, that $I(t)$ can only become infinite if the solution loses regularity.
We will sketch this argument below in the next section when proving Theorem~\ref{thm_1d_blowup}.

We have proved the following result:

\begin{theorem}
For the initial data as above, the CKY-model given by \eqref{n3} with Biot-Savart law \eqref{n6} develops a singularity in finite time.
\end{theorem}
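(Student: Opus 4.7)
The plan is to argue by contradiction: assume a smooth solution exists on $[0,T]$ for all $T>0$, then show the entropy functional $I(t)=\int_{\R}-\log\rho\,\rho\,d\xi$ (defined in~\rf{e1}) must become infinite in finite time, which contradicts smoothness. The strategy exploits the striking locality of the CKY Biot-Savart law in the $\xi$ coordinates, namely $U_\xi=\tfrac{2}{\pi}\Omega$, together with the fact that under the evolution~\rf{n3} the quantity $\rho=\Theta_\xi$ satisfies the pure continuity equation~\rf{a2}, so its mass is preserved and non-negativity persists. Under the standing assumptions, $\rho(\cdot,t)$ stays a compactly supported probability density and $\Omega(\cdot,t)\ge 0$ is transported with a forcing by $e^\xi\rho\ge 0$, so $\Omega$ remains non-negative.

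Next I would compute the first two time derivatives of $I$. Differentiating under the integral sign and using~\rf{a2} gives $\dot I=\int U_\xi\,\rho\,d\xi$, which in the CKY case collapses to $\dot I=\tfrac{2}{\pi}\int\Omega\rho\,d\xi$ by~\rf{e4}. Differentiating again and using the $\Omega$-equation~\rf{e5} together with the continuity equation for $\rho$ produces
\be
\ddot I=\tfrac{2}{\pi}\int e^{\xi}\rho^2\,d\xi=\tfrac{2}{\pi}\int e^{\xi+\log\rho}\,\rho\,d\xi,
\ee
after the transport terms cancel via integration by parts. Jensen's inequality applied to the probability measure $\rho\,d\xi$ and the convex function $e^z$ bounds this below by $\tfrac{2}{\pi}\exp\bigl(\int\xi\rho\,d\xi-I\bigr)$, and Lemma~\ref{l3} further bounds $\int\xi\rho\,d\xi\ge e^{I-1}$, yielding the double-exponential differential inequality $\ddot I\ge \tfrac{2}{\pi}\exp(e^{I-1}-I)$, exactly as in~\rf{e8}. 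Since $\dot I(0)>0$ (as $\Omega_0,\rho_0\ge 0$ are not identically zero on the overlap) and $\ddot I>0$, $I$ is strictly increasing and convex; since the right-hand side blows up double-exponentially in $I$, a routine ODE comparison shows $I$ must reach $+\infty$ at some finite $T^*<\infty$.

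Finally, I would translate ``$I\to\infty$'' into ``loss of smoothness.'' Reverting to original coordinates one has $I(t)=\int_0^\infty \theta_x(x,t)\log(x\,\theta_x(x,t))\,dx$. As long as the solution is smooth with $\theta_x$ bounded and compactly supported in, say, $[0,L]$, the density $\rho$ stays bounded, and the only way $I$ can diverge is through $-\log\rho$ being large on a set of positive $\rho$-mass, which in turn requires $\rho$ to spread towards $\xi=+\infty$, i.e.\ $\theta_x$ concentrating near $x=0$. A quantitative version of this, combined with the BKM-type criteria~\rf{bkmq}, allows one to conclude: if $\|\theta_x(\cdot,t)\|_{L^\infty}$ stays bounded and $\theta_x$ remains compactly supported in a fixed interval (both of which are guaranteed by Lagrangian transport under a Lipschitz $u$), then $I(t)$ stays finite. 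Contrapositively, $I(T^*)=\infty$ forces $\int_0^{T^*}\|u_x\|_{L^\infty}\,dt=\infty$ or $\int_0^{T^*}\|\theta_x\|_{L^\infty}\,dt=\infty$, which is the desired singularity.

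The main obstacle I anticipate is this last translation step: justifying that $I$ remains finite along any genuinely smooth solution and that its blow up implies the failure of a BKM-type criterion. Showing $\ddot I\ge 0$ and the Jensen step is clean, but one must carefully control how $\rho$ can degenerate (it could in principle stay smooth but become very small on parts of its support, sending $-\log\rho$ up) and relate this to norms controlled by the BKM criterion. The Lagrangian representation $\rho(\Phi(\xi,t),t)=\rho_0(\xi)/\Phi_\xi(\xi,t)$, where $\Phi$ is the flow of $U$, together with bounds on $U_\xi=\tfrac{2}{\pi}\Omega$, should reduce this to controlling $\|\Omega\|_{L^\infty}$, which is in turn subordinate to the quantities in~\rf{bkmq}.
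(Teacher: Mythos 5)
Your proposal follows essentially the same route as the paper: the entropy functional \eqref{e1}, the identities \eqref{e3}--\eqref{e6} using $U_\xi=\tfrac{2}{\pi}\Omega$, Jensen's inequality together with Lemma~\ref{l3} to reach the differential inequality \eqref{e8}, an ODE argument for finite-time divergence of $I$, and the translation back to original coordinates via the BKM-type criteria \eqref{bkmq} (which the paper likewise only sketches, deferring the details to the periodic case). The minor claim that $\dot I(0)>0$ is not needed, since \eqref{e8} already gives $\ddot I\ge\tfrac{2}{\pi}$ uniformly, but this does not affect the correctness of your argument.
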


\noindent
{\bf Remark.} Due to the local nature of equation $U_\xi=\Om$ in the CKY model, it is not hard to see that a similar argument applies whenever $\theta$ is increasing on some interval
and the initial vorticity $\om$ is non-negative on $(0,\infty)$.

\subsection{Blow up in the CKY-model and the HL-model -- a new functional}

If one tries to reproduce the above proof for the HL model, it quickly becomes apparent that the condition $\rho_\xi \geq 0$ seems necessary for the proof to go through. This leads to growth at infinity
for the initial data, which is not very reasonable.
Hence the above proof cannot be directly applied to the HL model. However, a different functional $\int_\R \rho \xi d\xi$ can be used to prove finite time blow up for both models, which we will demonstrate below.
Let's assume $\rho_0$ and $\Omega_0$ satisfy the conditions in the previous subsection.
Let $F(t) := \int \rho \xi d\xi = -\int_0^\infty \Theta d\xi$. Taking the time derivative of $F$, we have
\[
\frac{dF}{dt} = \int U \Theta_\xi \,d\xi
\]
for both models. For the CKY model, since $(U_{CKY})_\xi = \frac{2}{\pi} \Omega$, the right hand side is equal to $$G(t) := -\frac{2}{\pi} \int \Omega \Theta d\xi.$$ For the HL model, since its velocity field is ``stronger'' than in the CKY model (see subsection \ref{comparison}), we have
$\int U_{HL} \Theta_\xi d\xi \geq \int U_{CKY} \Theta_\xi d\xi = G(t)$.

Now let us take the time derivative of $G(t)$:
\[
\begin{split}
\frac{dG}{dt} &= -\frac{2}{\pi}\int \Omega_t \Theta + \Omega \Theta_t d\xi\\
&= \frac{2}{\pi} \int U\Omega_\xi \Theta + U\Omega \Theta_\xi - e^\xi \Theta \Theta_\xi d\xi\\
&= -\frac{2}{\pi} \int U_\xi \Omega \Theta d\xi + \frac{1}{\pi} \int e^\xi \Theta^2 d\xi
\end{split}
\]
For the CKY model, the first term on the right hand side is positive,
since $U_\xi = \frac{2}{\pi} \Omega$ and $\Theta \leq 0$. For the HL model,
we claim that $\int_{-\infty}^\xi U_\xi(\eta)\Om(\eta)\,d\eta \geq 0$
for any $\Omega\geq 0$ and for any $\xi\in\R$. The proof of the claim will be given in the next subsection.
Once this is proved, due to the fact that $\Theta\leq 0$ and is increasing, we can use integration by parts and the substitution $\Theta(\xi)=s$ to obtain
the following estimate for the HL model:
\[
-\frac{2}{\pi} \int U_\xi \Omega \Theta d\xi = \frac{2}{\pi}\int_{\Theta_{min}}^0 \int_{-\infty}^{\Theta^{-1}(s)} U_\xi \Omega \,d\xi\, ds \geq 0,
\]
where $\Theta^{-1}$ is the inverse function of $\Theta$. As a result, for both models, we have
\[
\begin{split}
\frac{dG}{dt} \geq \frac{1}{\pi} \int e^\xi \Theta^2 d\xi \geq \frac{1}{\pi}\Big(\int_0^\infty
-\Theta e^{\frac{\xi}{2}}e^{-\frac{\xi}{2}}\,d\xi\Big)^2 = \frac{1}{\pi} F(t)^2,
\end{split}
\]
which gives us the system
\[
\frac{dF}{dt} \geq G(t) \geq 0, \quad\quad
\frac{dG}{dt} \geq \frac{1}{\pi} F(t)^2,
\]
and one can obtain that $F(t)$ blows up in finite time by a standard ODE argument.

\subsection{Quadratic forms in the HL-model}
Now we prove the claim in the last subsection. Let
\be\la{n11}
I(\Om,\xi)=\int_{-\infty}^\xi U_\xi(\eta)\Om(\eta)\,d\eta\,.
\ee
\begin{lemma} \label{l2} For any smooth compactly supported $\Om\ge 0$ we have $I(\Om,\xi)\ge 0$ for all $\xi$.
\end{lemma}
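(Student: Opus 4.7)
\textbf{Proof plan for Lemma~\ref{l2}.} The plan is to view $I(\Om,\xi)$ as a double integral with convolution kernel $K'$ and exploit the even/odd splitting $K=K_{\rm sym}+K_a$ from~\eqref{n9}--\eqref{n10} together with the pointwise positivity of $\Om$. Since $U=K*\Om$ we have $U_\xi=K'*\Om$ (interpreted as a principal value where necessary, since $K(\xi)\sim\tfrac{1}{\pi}\log|\xi|$ near $\xi=0$), so
\begin{equation*}
I(\Om,\xi) \;=\; \iint_{(-\infty,\xi]\times\R} K'(\eta-\zeta)\,\Om(\eta)\Om(\zeta)\,d\eta\,d\zeta \;=\; I_1+I_2,
\end{equation*}
where $I_1$ is the contribution from the square $A=\{\eta\le\xi,\,\zeta\le\xi\}$ and $I_2$ the contribution from the strip $B=\{\eta\le\xi<\zeta\}$. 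We will show $I_1\ge 0$ and $I_2\ge 0$ separately.

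For $I_1$, decompose $K'=K_{\rm sym}'+K_a'$. On the symmetric square $A$, the product $\Om(\eta)\Om(\zeta)$ is symmetric in $(\eta,\zeta)$ while $K_{\rm sym}'$, being the derivative of an even function, is odd, so the $K_{\rm sym}'$-contribution to $I_1$ cancels by the antisymmetric/symmetric pairing; this cancellation is precisely what renders the principal-value singularity on the diagonal harmless. The remaining piece
\[
\iint_A K_a'(\eta-\zeta)\,\Om(\eta)\Om(\zeta)\,d\eta\,d\zeta
\]
is non-negative because $K_a$ is increasing on $\R$ (stated immediately after~\eqref{n10}), so $K_a'\ge 0$, and $\Om\ge 0$. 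For $I_2$, on $B$ we have $\eta-\zeta<0$; and $K$ is increasing on $(-\infty,0)$ (noted right after~\eqref{n5}, a consequence of Lemma~\ref{l1}(i) through the change of variable $s=e^{-\xi}$), hence $K'(\eta-\zeta)\ge 0$ on $B$. Combined with $\Om\ge 0$ this yields $I_2\ge 0$, and together with $I_1\ge 0$ proves the lemma.

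The step I expect to be most delicate is the symmetric/antisymmetric cancellation in $I_1$, because $K_{\rm sym}'$ inherits the $1/(\eta-\zeta)$ singularity of the Hilbert-transform-type operator $K'*\,\cdot\,$ and is only locally integrable as a principal value, while $K_a'$ has a merely logarithmic (hence absolutely integrable) singularity. I would make this rigorous by first replacing $K_{\rm sym}'$ by its truncation $K_{\rm sym}'\,\chi_{|\eta-\zeta|\ge\varepsilon}$, which is an odd bounded function whose double integral against the symmetric product $\Om(\eta)\Om(\zeta)$ over $A$ vanishes identically for every $\varepsilon>0$, and then passing to the limit $\varepsilon\to 0$ using the definition of $U_\xi$ as a principal value together with dominated convergence for the $K_a'$-piece. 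Beyond that, both $I_1\ge0$ and $I_2\ge0$ then follow from pointwise non-negativity of kernel and data.
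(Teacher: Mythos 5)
Your proposal is correct and is essentially the paper's own argument: the split of the double integral into the square $\{\eta,\zeta\le\xi\}$ and the strip $\{\eta\le\xi<\zeta\}$ is exactly the paper's decomposition $\Om=\Om_l+\Om_r$, with the square handled via the odd-kernel cancellation of $K_{\rm sym}'$ against the symmetric product plus $K_a'\ge 0$, and the strip handled via $K'>0$ on $(-\infty,0)$ together with $\Om\ge 0$. Your $\varepsilon$-truncation discussion just makes explicit the principal-value cancellation the paper treats implicitly (note only the harmless sign slip $K(\xi)\sim -\tfrac{1}{\pi}\log|\xi|$ near $0$), so no further changes are needed.
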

\begin{proof}
\sm
Let us write $\Om=\Om_l+\Om_r$, where
\be\la{p1}
\Om_l=\Om\,\chi_{(-\infty,\xi]}\,,\qquad \Om_r=\Om\,\chi_{(\xi,\infty)}\,.
\ee
We have
\be\la{p2}
U=U_l+U_r,\quad U_l=K*\Om_l,\quad U_r=K*\Om_r\,,
\ee
and
\be\la{p3}
I=I(\Om,\xi)=\int_\R U_\xi(\eta)\Om_l(\eta)\,d\eta\,=\int_R U_{l\xi}\Om_l\,d\eta+\int_R U_{r\xi}\Om_l\,d\eta\,.
\ee
We claim that in the last expression both integrals are non-negative. Denoting by $K'$ the derivative of $K$ (taken in the sense of distributions), we can write for the first integral \be\la{p4}
\int_\R U_{l\xi}\Om_l\,d\eta=\int_R\int_\R K'(\eta-\zeta)\Om_l(\zeta)\Om_l(\eta)\,d\eta\,d\zeta=\int_R\int_\R K_a'(\eta-\zeta)\Om_l(\zeta)\Om_l(\eta)\,d\eta\,d\zeta\ge 0\,,
\ee
as $K_a$ is increasing.
The second integral is equal to
\be\la{p5}
\int_\R\int_\R K'(\eta-\zeta)\Om_r(\zeta)\Om_l(\eta)\,d\,\zeta\,d\eta,
\ee
and we note that the integration can be restricted to the domain $\{\eta<\zeta\}$, as the integrand vanishes elsewhere.
As $K'(\xi)>0$ for $\xi<0$, the result follows.
\end{proof}

\section{The Finite Time Blowup for the HL-Model in the periodic setting}\label{sec_blowup}
In this section we consider the HL model in $[0,L]$ with periodic condition, and we will
prove our main result - Theorem~\ref{thm_1d_blowup} in the periodic setting.
Now $u_x$ is the (periodic) Hilbert transform of $\omega$, namely
\begin{displaymath}
  u_{x}(x) = \frac{1}{L} \PV\int_{0}^{L} \omega(y) \cot \bigl[ \mu (x-y) \bigr]\,dy =: H\omega(x),\qquad \mu := \pi/L.
\end{displaymath}
As a result, the nonlocal velocity $u$ is defined by:
\begin{equation}
  u(x) = Q\omega(x) := \frac{1}{\pi} \int_{0}^{L} \omega(y) \log \bigl| \sin[\mu(x-y)] \bigr|\,dy\,.
  \label{eqn_m_hl_u}
\end{equation}

In this case the change of variables of the previous section is less natural, and we will work in the original coordinates.

In this section, we consider smooth odd periodic initial data $\theta_{0x},\, \omega_{0}$ with period $L$.
We note that such functions are also odd with respect to the origin taken at $x = \frac{1}{2} L$.
 In addition,
we suppose $\theta_{0x},\, \omega_{0} \geq 0$ on $[0,\frac{1}{2} L]$. Then, the initial velocity $u_0$
also becomes odd at $x=0$ and $\frac{1}{2} L$ and $u_{0} \leq 0$ on $[0,\frac{1}{2} L]$ (for the proof of the last
assertion on $u_0$, see \eqref{eqn_u}).
Lastly, since \eqref{hl} is invariant under a constant shift $\theta \to \theta + \text{const}$,
 we may assume $\theta_{0}(0) = 0$.

Thanks to transport structure of \eqref{hl}, the evolution preserves
the assumptions as long as the solution exists. That is, $\theta_{x},\, \omega, u$ stay odd at $x=0$ and $\frac{1}{2} L$
with period $L$, $\theta(0) = 0$, and $\theta_{x},\, \omega, (-u) \geq 0$ on $[0,\frac{1}{2} L]$.


The proof will be based on an idea which builds on the insight gained from the whole line case. It considers the integral
\begin{displaymath}
  I(t) := \int_{0}^{L/2} \theta(x,t) \cot(\mu x)\,dx
\end{displaymath}
and shows that it must blow up in finite time if $I(0)>0$. As will be shown below, this implies the blowup of the HL-model.

The proof relies on the following two lemmas, which reveal the key properties of the velocity $u$ as defined by the Biot-Savart law \eqref{eqn_m_hl_u}.
These Lemmas can be viewed as analogs of the Lemmas~\ref{l1} and \ref{l2} from the whole line case.
It is worth noting that both lemmas follow directly from \eqref{eqn_m_hl_u} and do not depend on the actual dynamics of the flow.

\begin{lemma}\label{l3a}
Let $\omega$ be periodic with period $L$ and odd at $x = 0$ and let $u = Q\omega$ be as defined by \eqref{eqn_m_hl_u}. Then for any $x \in [0,\frac{1}{2} L]$,
\begin{equation}
  u(x) \cot(\mu x) = -\frac{1}{\pi} \int_{0}^{L/2} K(x,y) \omega(y) \cot(\mu y)\,dy,
  \label{eqn_u_ker}
\end{equation}
where
\begin{equation}
  K(x,y) = s \log \biggl| \frac{s+1}{s-1} \biggr|\qquad \text{with}\qquad s = s(x,y) = \frac{\tan(\mu y)}{\tan(\mu x)}.
  \label{eqn_K}
\end{equation}
Furthermore, the kernel $K(x,y)$ has the following properties:
\begin{enumerate}
  \item $K(x,y) \geq 0$ for all $x,\, y \in (0,\frac{1}{2} L)$ with $x \neq y$;
  \vspace{1mm}
  \item $K(x,y) \geq 2$ and $K_{x}(x,y) \geq 0$ for all $0 < x < y < \frac{1}{2} L$;
  \vspace{1mm}
  \item $K(x,y) \geq 2s^{2}$ and $K_{x}(x,y) \leq 0$ for all $0 < y < x < \frac{1}{2} L$.
\end{enumerate}
\label{lmm_kernel}
\end{lemma}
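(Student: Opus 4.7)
The plan is two-stage: first derive the integral representation \eqref{eqn_u_ker} by folding the defining integral using the odd symmetries of $\omega$ at $x=0$ and $x=L/2$, and second analyze $K$ as a single-variable function $\phi(s):=s\log|(s+1)/(s-1)|$. The representation is geometry and trigonometry; the rest is one-variable calculus, cleanest via power series.

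For the representation, I would split the integral in \eqref{eqn_m_hl_u} as $\int_0^{L/2}+\int_{L/2}^L$. In the second piece, substituting $y\mapsto L-y$ together with periodicity and oddness (which give $\omega(L-y)=-\omega(y)$) and the identity $|\sin(\mu(x-L+y))|=|\sin(\mu(x+y)-\pi)|=|\sin(\mu(x+y))|$ combines the two contributions into
\begin{displaymath}
u(x)=\frac{1}{\pi}\int_0^{L/2}\omega(y)\log\left|\frac{\sin(\mu(x-y))}{\sin(\mu(x+y))}\right|dy.
\end{displaymath}
Expanding $\sin(\mu(x\pm y))=\sin(\mu x)\cos(\mu y)\pm\cos(\mu x)\sin(\mu y)$ and dividing numerator and denominator by $\sin(\mu x)\cos(\mu y)$ gives $\sin(\mu(x-y))/\sin(\mu(x+y))=(1-s)/(1+s)$ with $s=\tan(\mu y)/\tan(\mu x)$. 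Since $\cot(\mu x)=s\cot(\mu y)$, multiplying by $\cot(\mu x)$ produces \eqref{eqn_u_ker} with $K(x,y)=s\log|(s+1)/(s-1)|$ (the sign comes from $\log|(1-s)/(1+s)|=-\log|(s+1)/(s-1)|$).

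For the kernel bounds, I would analyze $\phi$ in the two regimes via power series. On $(0,1)$,
\begin{displaymath}
\phi(s)=s\log\frac{1+s}{1-s}=2s^2\sum_{k\geq 0}\frac{s^{2k}}{2k+1},
\end{displaymath}
which makes both $\phi(s)\geq 2s^2$ and strict monotone increase transparent. On $(1,\infty)$, substituting $r=1/s\in(0,1)$ gives $\phi(s)=2\sum_{k\geq 0}r^{2k}/(2k+1)$, displaying $\phi(s)\geq 2$ and strict decrease in $s$. These two expansions together cover the positivity claim (1) and the lower bounds in (2) and (3).

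For the monotonicity in $x$, I would apply the chain rule $K_x=\phi'(s)\,s_x$, with $s_x=-\mu\tan(\mu y)/\sin^2(\mu x)<0$ on $(0,L/2)$. Hence $K_x$ carries the sign opposite to $\phi'(s)$: positive when $x<y$ (so $s>1$ and $\phi'<0$), negative when $y<x$ (so $s<1$ and $\phi'>0$). The only step that requires any real care is the folding bookkeeping in Stage 1---tracking the absolute value that absorbs the shift by $\pi$ and the sign coming from $\omega(L-y)=-\omega(y)$---after which the lemma reduces to the clean power-series comparisons above, and I do not anticipate a serious obstacle.
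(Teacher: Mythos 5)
Your proposal is correct and follows essentially the same route as the paper: fold the integral over $[L/2,L]$ onto $[0,L/2]$ using oddness and periodicity, rewrite the resulting logarithm via the tangent identity to obtain the kernel $K(x,y)=s\log|(s+1)/(s-1)|$, and then obtain the bounds and monotonicity from the power-series expansions in $s$ (for $s<1$) and $1/s$ (for $s>1$) together with the chain rule and $s_x<0$. The only cosmetic difference is that the paper checks property (a) directly from $|s+1|>|s-1|$ for $s>0$ rather than from the series, which changes nothing of substance.
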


\begin{proof}
The velocity $u = Q\omega$ as defined by \eqref{eqn_m_hl_u} admits the following representation:
\begin{align}
  u(x) & = \frac{1}{\pi} \biggl[ \int_{0}^{L/2} + \int_{L/2}^{L} \biggr] \omega(y) \log \bigl| \sin[\mu(x-y)] \bigr|\,dy \nonumber \\
  & = \frac{1}{\pi} \int_{0}^{L/2} \omega(y) \Bigl\{ \log \bigl| \sin[\mu(x-y)] \bigr| - \log \bigl| \sin[\mu(x+y)] \bigr| \Bigr\}\,dy \nonumber \\
  & = \frac{1}{\pi} \int_{0}^{L/2} \omega(y) \log \biggl| \frac{\tan(\mu x) - \tan(\mu y)}{\tan(\mu x) + \tan(\mu y)} \biggr|\,dy. \label{eqn_u}
\end{align}
This shows that
\begin{displaymath}
  u(x) \cot(\mu x) = -\frac{1}{\pi} \int_{0}^{L/2} K(x,y) \omega(y) \cot(\mu y)\,dy,
\end{displaymath}
where $K(x,y)$ is the kernel defined by \eqref{eqn_K}, hence the first part of the lemma follows.

It remains to check that $K(x,y)$ satisfies the properties (a)--(c). To prove (a), note that $x,\, y \in (0,\frac{1}{2} L)$ implies $s(x,y) > 0$, hence $\abs{s+1} > \abs{s-1}$. The claim follows easily.

For (b), observe that $s(x,y) > 1$ for $0 < x < y < \frac{1}{2} L$. Hence we can express $K(x,y)$ as a Taylor series in terms of $s^{-1}$:
\begin{displaymath}
  K(x,y) = s \Bigl\{ \log \bigl( 1 + s^{-1} \bigr) - \log \bigl( 1 - s^{-1} \bigr) \Bigr\} = 2\sum_{n=0}^{\infty} \frac{s^{-2n}}{2n+1} \geq 2.
\end{displaymath}
By taking derivatives of $x$ on both sides and observing that $s_{x}(x,y) \leq 0$, we also deduce $K_{x}(x,y) \geq 0$ for $0 < x < y < \frac{1}{2} L$. This establishes (b).

Finally, to prove (c), we proceed as in (b) and note that $0 < s(x,y) < 1$ for $0 < y < x < \frac{1}{2} L$. Thus
\begin{displaymath}
  K(x,y) = s \Bigl\{ \log(1+s) - \log(1-s) \Bigr\} = 2\sum_{n=0}^{\infty} \frac{s^{2n+2}}{2n+1} \geq 2s^{2}.
\end{displaymath}
In addition, taking derivatives in $x$ and using $s_{x}(x,y) \leq 0$ shows $K_{x}(x,y) \leq 0$ for $0 < y < x < \frac{1}{2} L$. This completes the proof of the lemma.
\end{proof}

{\bf Remark.}
Lemma \ref{lmm_kernel} implies that the periodic HL-velocity $u_{\text{HL}}$ as defined by \eqref{eqn_m_hl_u} is, up to a constant factor, ``stronger'' than the CKY-velocity $u_{\text{CKY}}$ provided that $\omega \geq 0$ on $[0,\frac{1}{2} L]$. Indeed, since $\cot(\mu x) \sim (\mu x)^{-1}$ for small $x$, the properties (a) and (b) of the kernel $K(x,y)$ imply that, for any $z \in (0,\frac{1}{2} L)$, there exists a positive constant $C$ depending on $L$ and $z$ such that
\begin{displaymath}
  u_{\text{HL}}(x) \leq -Cx \int_{x}^{z} \frac{1}{y}\,\omega(y)\,dy,\qquad \forall x \in [0,z].
\end{displaymath}

The second lemma that we shall prove concerns the positivity of a certain quadratic form of the vorticity $\omega$, which plays the central role in the proof of Theorem \ref{thm_1d_blowup}.

\begin{lemma}\label{l4}
Let the assumptions in Lemma \ref{lmm_kernel} be satisfied and assume in addition that $\omega \geq 0$ on $[0,\frac{1}{2} L]$. Then for any $a \in [0,\frac{1}{2} L]$,
\begin{equation}
  \int_{a}^{L/2} \omega(x) \bigl[ u(x) \cot(\mu x) \bigr]_{x}\,dx \geq 0.
  \label{eqn_wvy_pos}
\end{equation}
\label{lmm_wvy_pos}
\end{lemma}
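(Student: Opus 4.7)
The plan is to mimic the whole-line proof of Lemma~\ref{l2} by splitting the vorticity at the cut $x=a$ and using the refined kernel bounds in Lemma~\ref{lmm_kernel} as periodic substitutes for the monotonicity of $K_{a}$ and the sign of $K'$ used there. Write $\omega = \omega^{\mathrm{in}} + \omega^{\mathrm{out}}$ with $\omega^{\mathrm{out}} := \omega\,\chi_{[a,L/2]}$, extended oddly at $L/2$ and then $L$-periodically (so it is also odd at $0$), and $\omega^{\mathrm{in}} := \omega - \omega^{\mathrm{out}}$. Let $u^{\mathrm{in}} := Q\omega^{\mathrm{in}}$ and $u^{\mathrm{out}} := Q\omega^{\mathrm{out}}$, so that $u = u^{\mathrm{in}} + u^{\mathrm{out}}$, and split the target integral as $J^{\mathrm{in}} + J^{\mathrm{out}}$ in the obvious way.

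The cross term $J^{\mathrm{in}}$ is straightforward. Applying \eqref{eqn_u_ker} to $\omega^{\mathrm{in}}$ yields
\begin{displaymath}
[u^{\mathrm{in}}(x)\cot(\mu x)]_{x} = -\frac{1}{\pi}\int_{0}^{a} K_{x}(x,y)\,\omega(y)\cot(\mu y)\,dy,
\end{displaymath}
and on the domain $x\in[a,L/2]$, $y\in[0,a]$ one has $0<y<x<L/2$, so property~(c) of Lemma~\ref{lmm_kernel} gives $K_{x}(x,y)\le 0$. Hence the integrand above is non-negative and $J^{\mathrm{in}}\ge 0$.

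The self-interaction term
\begin{displaymath}
J^{\mathrm{out}} = -\frac{1}{\pi}\iint_{[a,L/2]^{2}}\omega(x)\omega(y)\,K_{x}(x,y)\cot(\mu y)\,dy\,dx
\end{displaymath}
is the heart of the proof. I would symmetrize in $(x,y)$: swapping the dummy variables and averaging gives $2J^{\mathrm{out}} = -\frac{1}{\pi}\iint \omega(x)\omega(y) B(x,y)\,dy\,dx$ with $B(x,y) := K_{x}(x,y)\cot(\mu y) + K_{x}(y,x)\cot(\mu x)$. Setting $t := \tan(\mu x)$, $\tau := \tan(\mu y)$, $s := \tau/t$, writing $K = sM(s)$ with $M(s) = \log|(s+1)/(s-1)|$, and using the identities $\tan\cdot\cot = 1$ and $s_{x} = -\mu\tan(\mu y)\csc^{2}(\mu x)$, a direct computation collapses $B$ to
\begin{displaymath}
B(x,y) = -\mu\,\frac{M(s)\bigl(2t^{2}\tau^{2} + t^{2} + \tau^{2}\bigr) - 2t\tau}{t^{2}\tau^{2}}.
\end{displaymath}
Thus $J^{\mathrm{out}}\ge 0$ reduces to the pointwise estimate $M(s)(2t^{2}\tau^{2}+t^{2}+\tau^{2})\ge 2t\tau$ for $t,\tau>0$ with $t\ne\tau$.

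This algebraic inequality is the main obstacle; I expect it is exactly the ``non-trivial sign-definite quadratic form estimate'' alluded to in the introduction. My plan is to deduce it from the intermediate bound
\begin{displaymath}
M(s)\ \ge\ \frac{2s}{1+s^{2}}\ =\ \frac{2t\tau}{t^{2}+\tau^{2}},\qquad s>0,\ s\ne 1.
\end{displaymath}
Since $M(s)=M(1/s)$ and the right-hand side shares the same symmetry, it suffices to treat $s>1$, where the power series $\log\frac{1+u}{1-u} = 2u + \tfrac{2}{3}u^{3} + \cdots \ge 2u$ applied at $u=1/s\in(0,1)$ gives $M(s)\ge 2/s$, and then $2/s\ge 2s/(1+s^{2})$ is elementary. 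Multiplying the bound by $2t^{2}\tau^{2}+t^{2}+\tau^{2}$ yields
\begin{displaymath}
M(s)\bigl(2t^{2}\tau^{2}+t^{2}+\tau^{2}\bigr)\ \ge\ 2t\tau\left[1 + \frac{2t^{2}\tau^{2}}{t^{2}+\tau^{2}}\right]\ \ge\ 2t\tau,
\end{displaymath}
so $B\le 0$ on $[a,L/2]^{2}\setminus\{x=y\}$, giving $J^{\mathrm{out}}\ge 0$. Combined with $J^{\mathrm{in}}\ge 0$, this establishes \eqref{eqn_wvy_pos}.
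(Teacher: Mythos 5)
Your proof is correct and follows essentially the same route as the paper's: the same splitting of $\omega$ at $x=a$, the cross term handled via $K_x\le 0$ from Lemma~\ref{lmm_kernel}(c), and the self-interaction term symmetrized exactly as in \eqref{eqn_ww_s} and controlled by the same key inequality $\log\bigl|\tfrac{s+1}{s-1}\bigr|\ge \tfrac{2s}{s^2+1}$. The only difference is cosmetic: you collapse the symmetrized kernel into a single fraction in $\tan(\mu x),\tan(\mu y)$ before applying that bound, while the paper substitutes the bound into $T(x,y)$ and factors the resulting expression.
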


\begin{proof}
First, we note that for any $x,\, a \in [0,\frac{1}{2} L]$, $\omega(x)$ can be decomposed as
\begin{displaymath}
  \omega(x) = \omega(x) \chi_{[0,a]}(x) + \omega(x) \chi_{[a,\frac{1}{2} L]}(x) =: \omega_{l}(x) + \omega_{r}(x),
\end{displaymath}
where $\chi_{A}$ denotes the characteristic function of the set $A$. Using this decomposition and Lemma \ref{lmm_kernel}, we split the integral in \eqref{eqn_wvy_pos} as:
\begin{multline*}
  \int_{a}^{L/2} \omega(x) \bigl[ u(x) \cot(\mu x) \bigr]_{x}\,dx = -\frac{1}{\pi} \int_{0}^{L/2} \omega_{r}(x) \int_{0}^{L/2} \omega_{l}(y) \cot(\mu y) K_{x}(x,y)\,dy\,dx \\
  - \frac{1}{\pi} \int_{0}^{L/2} \omega_{r}(x) \int_{0}^{L/2} \omega_{r}(y) \cot(\mu y) K_{x}(x,y)\,dy\,dx =: -I_{1} - I_{2}.
\end{multline*}
Clearly, the lemma follows if both $I_{1}$ and $I_{2}$ are non-positive. For $I_{1}$, we observe from the definition of $\omega_{l}$ and $\omega_{r}$ that the integrand in $I_{1}$ is nonzero only when $y \leq x$. Using $K_{x} \leq 0$ as proved in Lemma \ref{lmm_kernel}(c) and the assumption that $\omega \geq 0$ on $[0,\frac{1}{2} L]$, we then deduce $I_{1} \leq 0$. As for $I_{2}$, we write $G := K_{x}$ and observe that
\begin{equation}
  I_{2} = \frac{1}{2\pi} \int_{0}^{L/2} \int_{0}^{L/2} \omega_{r}(x) \omega_{r}(y) T(x,y)\,dy\,dx,
  \label{eqn_ww_s}
\end{equation}
where $T(x,y) = \cot(\mu y) G(x,y) + \cot(\mu x) G (y,x)$. We shall show that $T(x,y) \leq 0$ for all $x,\, y \in (0, \frac{1}{2} L)$, which then implies $I_{2} \leq 0$. To this end, we compute
\begin{displaymath}
  G(x,y) = -\mu \csc^{2}(\mu x) \tan(\mu y) \biggl\{ \log \biggl| \frac{s+1}{s-1} \biggr| - \frac{2s}{s^{2}-1} \biggr\},
\end{displaymath}
and then
\begin{displaymath}
  T(x,y) = -\mu \bigl[ \csc^{2}(\mu x) + \csc^{2}(\mu y) \bigr] \log \biggl| \frac{s+1}{s-1} \biggr| + \mu \bigl[ \csc^{2}(\mu x) - \csc^{2}(\mu y) \bigr] \frac{2s}{s^{2}-1}.
\end{displaymath}
Thanks to Lemma \ref{lmm_kernel}(b) and (c), we have
\begin{displaymath}
  K(x,y) \geq \frac{2s^{2}}{s^{2}+1},\qquad \forall s \geq 0,
\end{displaymath}
which implies that
\begin{displaymath}
  \log \biggl| \frac{s+1}{s-1} \biggr| \geq \frac{2s}{s^{2}+1},\qquad \forall s \geq 0.
\end{displaymath}
It then follows that
\begin{displaymath}
  T(x,y) \leq -\biggl\{ \frac{4\mu s \csc^{2}(\mu x) \sec^{2}(\mu y)}{s^{2}+1} \biggr\} \cdot \biggl\{ \frac{\cos^{2}(\mu x) - \cos^{2}(\mu y)}{s^{2}-1} \biggr\} \leq 0,\qquad \forall x,\, y \in (0,\tfrac{1}{2} L),
\end{displaymath}
and hence $I_{2} \leq 0$. This completes the proof of the lemma.
\end{proof}

\begin{remark}
The positivity of the integral in \eqref{eqn_wvy_pos} corresponds to the positivity of $\int_{a}^{1} \omega(x) [u(x)/x]_{x}\,dx$ in the CKY-model, and so to Lemma~\ref{l2} in the whole line case.
Indeed, thanks to the local nature of the Biot-Savart law for the CKY model, the positivity of the latter integral follows almost immediately from the definition of the CKY-velocity $u_{\text{CKY}}$:
\begin{displaymath}
  \int_{a}^{1} \omega(x) \bigl[ u_{\text{CKY}}(x)/x \bigr]_{x}\,dx = \int_{a}^{1} \frac{1}{x}\, \omega^{2}(x)\,dx \geq 0.
\end{displaymath}
In this sense, Lemma \ref{lmm_wvy_pos} is quite surprising since the HL-velocity $u_{\text{HL}}$ as defined by \eqref{eqn_m_hl_u} does not have such a simple structure. It is the careful analysis of the kernel $K(x,y)$ (Lemma \ref{lmm_kernel}) and the symmetrization technique \eqref{eqn_ww_s} that make the estimate \eqref{eqn_wvy_pos} possible.
\end{remark}

\begin{proof}[Proof of Theorem \ref{thm_1d_blowup}]
We are going to
show a finite time blow up of the quantity \begin{equation}
  I(t) := \int_{0}^{L/2} \theta(x,t) \cot(\mu x)\,dx.
  \label{eqn_def_I}
\end{equation}
  We claim that this implies the finite-time blowup of the corresponding solution $(\theta,\omega)$ of the HL-model \eqref{hl}. Indeed, applying integration by parts to \eqref{eqn_def_I}, we see
\begin{displaymath}
  I(t) = -\frac{1}{\mu} \int_{0}^{L/2} \theta_{x}(x,t) \log\abs{\sin(\mu x)}\,dx,
\end{displaymath}
which can be bounded as follows for some constant $C > 0$:
\begin{displaymath}
  \abs{I(t)} \leq C \norm{\theta_{x}(\cdot,t)}_{L^{\infty}} \leq C \norm{\theta_{0x}}_{L^{\infty}} \exp \biggl\{ \int_{0}^{t} \norm{u_x(\cdot,s)}_{L^{\infty}}\,ds \biggr\}.
\end{displaymath}
Thus if $I(t)$ blows up at a finite time $T$, the same must hold true for $\int_{0}^{t} \norm{u_x(\cdot,s)}_{L^{\infty}}\,ds$. The finite-time blowup of the HL-model then follows from the corresponding Beale-Kato-Majda type condition \eqref{bkmq} for the HL model.

To prove the finite time blowup of $I(t)$, we assume $I(0)>0$ and consider
\begin{align*}
  \frac{d}{dt} I(t) & = -\int_{0}^{L/2} u(x) \theta_{x}(x) \cot(\mu x)\,dx \\
  & = \frac{1}{\pi} \int_{0}^{L/2} \theta_{x}(x) \int_{0}^{L/2} \omega(y) \cot(\mu y) K(x,y)\,dy\,dx,
\end{align*}
where in the second step we have used the representation formula \eqref{eqn_u_ker} from Lemma \ref{lmm_kernel}. According to our choice of the initial data and the properties of the kernel $K(x,y)$ as proved in Lemma \ref{lmm_kernel}(a)--(b), we have $\theta_{x},\, \omega \geq 0$ on $[0,\frac{1}{2} L]$, $K \geq 0$ for $y < x$, and $K \geq 2$ for $y > x$. Thus
\begin{displaymath}
\begin{split}
  \frac{d}{dt} I(t) &\geq \frac{2}{\pi} \int_{0}^{L/2} \theta_{x}(x) \int_{x}^{L/2} \omega(y) \cot(\mu y)\,dy\,dx \\
  &=\frac{2}{\pi} \int_0^{L/2} \theta(x) \omega(x) \cot(\mu x) dx=: J(t).
\end{split}
\end{displaymath}

Taking time derivative of $J(t)$ gives
\[
\begin{split}
\frac{d}{dt}J(t) &= \frac{2}{\pi} \int_0^{L/2} -\big(\theta(x)\omega(x)\big)_x u(x)\cot(\mu x) + \theta_x(x) \theta(x) \cot(\mu x)\,dx \\
&= \frac{2}{\pi} \int_0^{L/2} \theta(x) \omega(x) \big(u(x)\cot(\mu x)\big)_x dx + \frac{\mu}{\pi} \int_{0}^{L/2} \theta^{2}(x)\csc^{2}(\mu x)\,dx =: T_1 + T_2
\end{split}
\]
For $T_1$, since $\theta(x)$ is a nonnegative increasing function on $[0,L/2]$, one has, after integrating by parts
\begin{equation}\begin{split}
T_1 &
=
\frac{2}{\pi} \int_0^{L/2} \theta_y(y) \Big[\int_y^{L/2}\omega(x) \big(u(x)\cot(\mu x)\big)_xdx \Big] dy\geq0\,,
\end{split}\end{equation}

where
we applied Lemma \ref{lmm_wvy_pos} to get the inequality.

For $T_2$, we can find a lower bound using the Cauchy-Schwarz inequality:
\[
\begin{split}
T_2 &\geq \frac{\mu}{\pi} \int_{0}^{L/2} \theta^{2}(x)\cot^{2}(\mu x)\,dx \\
&\geq \frac{\mu}{\pi} \frac{2}{L} \Big(\int_0^{L/2} \theta(x) \cot(\mu x) dx \Big)^2 \geq \frac{2}{ L^2} I(t)^2
\end{split}
\]
Finally we have $\dfrac{dJ}{dt} \geq \dfrac{2}{L^2}{I^2}$, implying
\begin{equation}\label{gengron}
\frac{d}{dt}I(t) \geq J(0) + c_{0}\int_0^t I(t)^2dt \geq c_{0}\int_0^t I(t)^2dt,\qquad c_{0} = \frac{2}{L^{2}},
\end{equation}
From this inequality finite time blow up can be inferred in a standard way. We sketch one such argument below.

The inequality \eqref{gengron} can be equivalently written as
\begin{displaymath}
  g'' \geq 2c_{0} g (g')^{1/2},\qquad \text{where}\qquad g(t) = \int_{0}^{t} I^{2}(s)\,ds.
\end{displaymath}

Note that $\alpha:=g'(0)>0$ and $g(0)=0$. It is not difficult to show that if $h$ satisfies the second order differential equation $
h'' = 2c_{0} h (h')^{1/2}$ for the initial data
$h'(0)=\alpha \mbox{ and }h(0)=0$, then $g\geq h$ as long as these functions are well defined.

By substitution $f:=h'$, we obtain $\sqrt{f}\dfrac{df}{dh}=2c_0h$. Solving this equation we see
that for $t>0$, $h$ satisfies
$$
  (h'(t))^{3/2} = \alpha^{3/2} + \tfrac{3}{2} c_{0} h^{2}(t).
$$ From the differential inequality $h'(t)\geq \Big[\tfrac{3}{2} c_{0} h^{2}(t)\Big]^{2/3}$ together with $h(t_0)>0$ for any $t_0>0$, we get
finite time blow up for $h$. Indeed,
for any fixed $t_0>0$, we have $h(t_0)\geq \alpha\cdot t_0$ from $h'(t)\geq\alpha$. Then, for $t\geq t_0$,
a simple computation leads to
$
  h(t) \geq \Bigl( ( \alpha t_0)^{-1/3} - \tfrac{1}{3} \bigl( \tfrac{3}{2} c_{0} \bigr)^{2/3} (t-t_0) \Big)^{-3}.
$
It immediately implies the finite-time blowup of $I = (g')^{1/2}$. 
The proof of Theorem \ref{thm_1d_blowup} is complete.

\end{proof}

\section{Appendix}

In this appendix we prove bounds on the kinetic energy as well as other norms of the velocity $u$ in the periodic HL-model, showing that they remain finite at the blow up time.
We will consider a class of initial data which is smaller than the entire set for which we prove finite time blow up. With more technical effort, we can
generalize the bounds below to a larger class; however our main point is to provide blow up examples with finite kinetic energy. We therefore choose to work with the smaller
class to reduce technicalities.

In the blow up scenario for the $L$-periodic HL model, we assumed that $\omega_0$ and $\theta_{0x}$ are smooth and odd with respect to $0$ and $L/2$, and $\omega_0,\theta_{0x} \geq 0$
on $[0,\frac12 L].$ Let us assume, in addition, that $\omega_0$ and $\theta_{0x}$ vanish on $[L/4,L/2].$
Then Lemma~\ref{l3a} implies that this remains true for all times.

\begin{prop}\label{hlenergy}
Suppose that the initial data to the periodic HL-model are as described above. Then while the solution remains regular, the fluid velocity $u$ satisfies
\begin{equation}\label{ubound91}
\|u(\cdot, t)\|_{BMO} \leq C\left(\|\omega_0\|_{L^1}+\|\theta_0\|_{L^\infty}t\right).
\end{equation}
In particular, the right hand side of \eqref{ubound91} also bounds any $L^p,$ $p<\infty,$ norm $\|u\|_{L^p},$ with a constant dependent on $p.$
\end{prop}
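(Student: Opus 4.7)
The plan is to reduce the $BMO$ and $L^p$ bounds on $u$ to an $L^1$ bound on $\omega$, exploiting that $u$ comes from $\omega$ by a logarithmic (essentially antiderivative-of-Hilbert) Biot-Savart law.

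\textbf{Step 1 ($L^1$ control of $\omega$).} Since $\omega(\cdot,t)\ge 0$ on $[0,L/2]$ (preserved because the source $\theta_x\ge 0$ there), $\|\omega\|_{L^1}=2\int_0^{L/2}\omega\,dx$. Using $\omega_t+u\omega_x=\theta_x$, integration by parts, and $u(0,t)=u(L/2,t)=0$ (odd symmetry),
$$\frac{d}{dt}\int_0^{L/2}\omega\,dx = \theta(L/2,t)-\theta(0,t)+\int_0^{L/2}(H\omega)\,\omega\,dx = \theta_0(L/2)+\int_0^{L/2}(H\omega)\,\omega\,dx,$$
where $\theta(L/2,t)=\theta_0(L/2)$ because $L/2$ is fixed by the flow. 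The key claim is that the quadratic form $\int_0^{L/2}(H\omega)\omega\,dx$ is $\le 0$. Folding the defining integral via the oddness of $\omega$,
$$(H\omega)(x)=\frac{1}{L}\,\PV\int_0^{L/2}\omega(y)\bigl[\cot(\mu(x-y))-\cot(\mu(x+y))\bigr]\,dy,$$
and symmetrizing the double integral in $(x,y)$ (the $\cot(\mu(x-y))$ part cancels by antisymmetry; the identity $\sin(2\mu y)-\sin(2\mu x)=-2\cos(\mu(x+y))\sin(\mu(x-y))$ collapses the rest) yields
$$\int_0^{L/2}(H\omega)\,\omega\,dx=-\frac{1}{L}\iint_{[0,L/2]^2}\omega(x)\omega(y)\cot(\mu(x+y))\,dx\,dy.$$
Because $u\le 0$ on $[0,L/2]$, the support of $\omega(\cdot,t)$ on $[0,L/2]$ stays inside $[0,L/4]$ (characteristics from $[L/4,L/2]$ never enter the $[0,L/4]$-support of $\theta_x$). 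Hence $x+y\le L/2$ on the support, $\cot(\mu(x+y))\ge 0$, and the right-hand side is nonpositive. This gives $\|\omega(\cdot,t)\|_{L^1}\le\|\omega_0\|_{L^1}+2\|\theta_0\|_{L^\infty}t$.

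\textbf{Step 2 (from $L^1$ to $BMO$ and $L^p$).} Set $\Omega(x,t):=\int_0^x\omega(y,t)\,dy$. Since $\int_0^L\omega=0$ by oddness, $\Omega$ is $L$-periodic, and $\|\Omega(\cdot,t)\|_{L^\infty}\le\|\omega(\cdot,t)\|_{L^1}$. The periodic Hilbert transform commutes with $\partial_x$, so
$$\partial_x\bigl(u-H\Omega\bigr)=H\omega-H(\Omega_x)=0,$$
which forces $u=H\Omega+c(t)$ for some $x$-constant $c(t)$. The classical boundedness $H:L^\infty(\mathbf{S}^1)\to BMO(\mathbf{S}^1)$ then gives
$$\|u(\cdot,t)\|_{BMO}=\|H\Omega(\cdot,t)\|_{BMO}\le C\|\Omega(\cdot,t)\|_{L^\infty}\le C\bigl(\|\omega_0\|_{L^1}+\|\theta_0\|_{L^\infty}t\bigr).$$
For the $L^p$ bounds, the John--Nirenberg inequality on the circle gives $\|f-\bar f\|_{L^p}\le C_p\|f\|_{BMO}$ for every $p<\infty$; since $u$ has zero mean (again by oddness), this upgrades to $\|u\|_{L^p}\le C_p\|u\|_{BMO}$ with the same right-hand side, up to a $p$-dependent constant.

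\textbf{Main obstacle.} The delicate step is the sign analysis of the half-interval quadratic form $\int_0^{L/2}(H\omega)\omega\,dx$. Global antisymmetry gives $\int_0^L(H\omega)\omega=0$, but the integral over $[0,L/2]$ need not vanish, and extracting the correct sign requires both the oddness-folding identity and the support constraint that keeps $x+y\le L/2$ so that $\cot(\mu(x+y))\ge 0$. This is precisely where the additional hypothesis $\supp(\omega_0,\theta_{0x})\subset[0,L/4]\cup[3L/4,L]$ is used, and this confinement is preserved by the flow because $u(L/4,t)\le 0$ prevents characteristics starting in $[L/4,L/2]$ from meeting the support of $\theta_x$. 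Without such confinement, $\cot(\mu(x+y))$ would change sign in the symmetrized kernel and a different argument would be required.
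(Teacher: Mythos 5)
Your argument is correct and essentially the paper's: the heart is the same $L^1$-in-time bound on $\omega$, obtained by integrating by parts, folding the periodic Hilbert transform via oddness so that the $\cot(\mu(x-y))$ part cancels by symmetry and the remaining $\cot(\mu(x+y))$ term gets a sign from the flow-preserved support in $[0,L/4]$, and your final step (writing $u=H\Omega+c(t)$ with $\Omega$ the bounded periodic antiderivative, then using $H\colon L^\infty\to BMO$ and John--Nirenberg) is an equivalent packaging of the paper's observation that $u$ is the convolution of the $L^1$ function $\omega$ with the fixed BMO kernel $\tfrac{1}{\pi}\log|\sin(\mu\,\cdot)|$. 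The only flaw is your parenthetical justification of the support confinement: forward characteristics starting in $[L/4,L/2]$ certainly can drift left into $[0,L/4]$ where $\theta_x$ lives; the correct statement is that the backward characteristic of any point of $(L/4,L/2]$ stays in $(L/4,L/2]$ (since $u\le 0$ on $[0,L/2]$ and $u(L/2,t)=0$), so such a point never meets $\supp\theta_x(\cdot,s)\subset[0,L/4]$ and carries zero initial vorticity --- which gives exactly the confinement $\supp\omega(\cdot,t)\cap[0,L/2]\subset[0,L/4]$ you need, so the conclusion is unaffected.
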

\begin{proof}
The first observation is that
\[ \partial_t \int_{0}^{L/2} \omega(x,t)\,dx = \int_0^{L/2} \theta_x \,dx - \int_0^{L/2} u \omega_x \,dx. \]
The first integral on the right hand side is bounded by $2\|\theta_0\|_{L^\infty}.$ In the second integral, while the
solution remains smooth, we can integrate by parts to obtain
\begin{eqnarray*}
  - L\int_0^{L/2} u \omega_x \,dx = L\int_0^{L/2} u_x \omega \,dx = \\
\int_0^{L/2} \omega(x)  P.V. \int_0^L \cot (\mu(x-y))\omega(y)\,dydx = \\
  \int_0^{L/2} \omega(x)  P.V. \int_0^{L/2} \cot (\mu(x-y))\omega(y)\,dydx \\
  - \int_0^{L/2} \omega(x) \int_0^{L/2} \cot (\mu(x+y))\omega(y)\,dydx,
  \end{eqnarray*}
where $\mu = \pi/L$ as before.
Here we used $\omega(L-x)= -\omega(x)$ when transforming the last integral.
Now
\[ \int_0^{L/2} P.V. \int_0^{L/2} \cot (\mu(x-y))\omega(y)\omega(x)\,dy dx =0 \]
by symmetry.
On the other hand,
\[ \int_0^{L/2}  \int_0^{L/2} \cot (\mu(x+y))\omega(y)\omega(x)\,dydx \geq 0\]
since the support of $\omega$ lies in $[0,L/4]$ for all times by our choice of the initial data and Lemma~\ref{l3a}.
Thus we get
\[  \int_{0}^{L/2} \omega(x,t)\,dx \leq \int_{0}^{L/2} \omega_0(x)\,dx + 2\|\theta_0\|_{L^\infty} t. \]
Due to solution symmetries and choice of the initial data, this implies the key estimate
\begin{equation}\label{oml1} \|\omega(\cdot, t)\|_{L^1} \leq \|\omega_0\|_{L^1} + 2\|\theta_0\|_{L^\infty} t \end{equation}
for all $t$ while the solution remains smooth.
Now the bound on $\|u\|_{L^2}$ follows by a simple direct estimate using \eqref{eqn_m_hl_u}:
\[ u(x) = \frac{1}{\pi} \int_0^L \omega(y) \log|\sin[\mu(x-y)]|\,dy. \]
More generally, \eqref{eqn_m_hl_u} also implies the bound $\|u\|_{BMO} \leq C \|\omega\|_{L^1}.$
Indeed, it is not difficult to see that the function $\log|\sin[\mu(x-y)]|$ belongs to BMO (see e.g. \cite{Stein} for the
$\log|x|$ case). Then the formula \eqref{eqn_m_hl_u} above and \eqref{oml1} show that $u$ is a convolution of BMO function
with an $L^1$ density, leading to $\|u\|_{BMO} \leq C \|\omega\|_{L^1}$ bound and so to \eqref{ubound91}.
The bound on the $L^p$ norm of $u$ for any $p < \infty$ follows from the BMO bound.
\end{proof}

\section*{Acknowledgement}

KC has been partially supported by the NSF FRG Grant DMS-1159133. TH and GL have been partially supported by the NSF FRG Grant DMS-1159138 and the DOE Grant DE-FG02-06ER25727.
AK has been partially supported by the NSF-DMS Grants 1412023 and 1535653, and he also acknowledges support from the Guggenheim Fellowship. VS has been partially supported by the NSF-DMS Grants 1101428 and 1159376.
YY has been partially supported by the NSF-DMS Grants 1104415, 1159133 and 1411857.


\begin{thebibliography}{99}

\bibitem[Beale et~al.(1984)Beale, Kato, and Majda]{bkm1984} J. T. Beale, T. Kato, and A. Majda, ``Remarks on the breakdown of smooth solutions for the 3-D Euler equations'', Commun. Math. Phys. 94, 61--66 (1984).

\bibitem[Bourgain and Li(2013)Bourgain and Li]{bl2013}
 J.~Bourgain, D.~Li,
``Strong ill-posedness of the incompressible Euler equation in borderline Sobolev spaces'', arXiv:1307.7090.

\bibitem[Bourgain and Li(2014)Bourgain and Li]{bl2014}
 J.~Bourgain, D.~Li,
``Strong illposedness of the incompressible Euler equation in integer $C^m$ spaces'', arXiv:1405.2847.

\bibitem[Burgers(1948)Burgers]{burgers}
 J.~M.~Burgers, ``A mathematical model illustrating the theory of turbulence'', edited by Richard von Mises and Theodore von K\'{a}rm\'{a}n, Advances in Applied Mechanics, pp. 171--199. Academic Press, Inc., New York, N. Y. (1948).


\bibitem[Castro and C\'{o}rdoba(2010)Castro and C\'{o}rdoba]{cc2010} A. Castro and D. C\'{o}rdoba, ``Infinite energy solutions of the surface quasi-geostrophic equation'', Adv. Math. 225, 1820--1829 (2010).



\bibitem[Choi et~al.(2015)Choi, Kiselev, and Yao]{cky2015} K. Choi, A. Kiselev, and Y. Yao, ``Finite time blow up
for a 1D model of 2D Boussinesq system'', Commun. Math. Phys. 334, 1667--1679, (2015).

\bibitem[Constantin(2000)Constantin]{Const} P.~Constantin, ``The Euler equations and nonlocal conservative Riccati equations'',
Internat. Math. Res. Notices, no. 9, 455--465 (2000).

\bibitem[Constantin et~al.(1985)Constantin, Lax, and Majda]{clm1985} P. Constantin, P. D. Lax, and A. Majda, ``A simple one-dimensional model for the three-dimensional vorticity equation'', Comm. Pure Appl. Math. 38, 715--724 (1985).

\bibitem[Constantin et~al.(1994)Constantin, Majda and Tabak]{cmt1994}
P.~Constantin, A.~Majda, E.~Tabak, ``Formation of strong fronts in the 2-D quasigeostrophic thermal active scalar'', Nonlinearity 7, 1495--1533 (1994).

\bibitem[C\'{o}rdoba et~al.(2005)C\'{o}rdoba, C\'{o}rdoba, and Fontelos]{ccf2005} A. C\'{o}rdoba, D. C\'{o}rdoba, and M. A. Fontelos, ``Formation of singularities for a transport equation with nonlocal velocity'', Ann. Math. 162, 1377--1389 (2005).


\bibitem[Danchin(2013)Danchin]{danchin2013} R.~Danchin, ``Remarks on the lifespan of the solutions to some models of incompressible fluid mechanics'',
Proc. Amer. Math. Soc. 141, 1979--1993 (2013).

\bibitem[De Gregorio(1990)De Gregorio]{degreg1990} S. De Gregorio, ``On a one-dimensional model for the three-dimensional vorticity equation'', J. Stat. Phys. 59, 1251--1263 (1990).

\bibitem[De Gregorio(1996)De Gregorio]{degreg1996} S. De Gregorio, ``A partial differential equation arising in a 1D model for the 3D vorticity equation'', Math. Methods Appl. Sci. 19, 1233--1255 (1996).


\bibitem[Einstein(1926)Einstein]{einstein}
A. Einstein, ``The causes of the formation of meanders in the courses of rivers and of the so-called Baer's law'', Die Naturwissenschaften 14 (1926); English translation in {\sl Ideas and Opinions}, 1954; availble online at {\tt http://people.ucalgary.ca/$\tilde{ }$kmuldrew/river.html}.

\bibitem[Gustafson et~al.(2006)Gustafson, Kang and Tsai]{gkt2006} S.~Gustafson, K.~Kang, T.-P.~Tsai,
``Regularity criteria for suitable weak solutions of the Navier-Stokes equations near the boundary'',
J. Differ. Equations 226, 594--618 (2006).


\bibitem[Glassey(1977)Glassey]{glassey1977}
R.~T.~Glassey, ``On the blowing up of solutions to the Cauchy problem for nonlinear Schr\"{o}dinger equations'', J.~Math.~Phys. 18, 1794--1797 (1977).

\bibitem[Hopf(1950)Hopf]{hopf}
E.~Hopf, ``The partial differential equation $u_t+uu_{x}=\mu u_{xx}$'', Comm. Pure Appl. Math. 3, 201--230 (1950).

\bibitem[Hou and Luo(2013)Hou and Luo]{hl2013} T.~Y.~Hou and, G.~Luo ``On the finite-time blowup of a 1D model for the 3D incompressible Euler equations'', arXiv:1311.2613 [math.AP].


\bibitem[Kiselev and Sverak(2014)Kiselev and Sverak]{kv2014} A. Kiselev and V. Sverak, ``Small scale creation for solutions of the incompressible two dimensional Euler equation'',
Annals of Math. 180, 1205--1220, (2014).


\bibitem[Luo and Hou(2014a)Luo and Hou]{lh2014a} G. Luo and T. Y. Hou,
``Potentially Singular Solutions of the 3D Axisymmetric Euler Equations'',
PNAS
111,
12968--12973, (2014).


\bibitem[Luo and Hou(2014b)Luo and Hou]{lh2014b} G. Luo and T. Y. Hou,
``Toward the Finite-Time Blowup of the 3D Incompressible Euler Equations: a Numerical Investigation''.
SIAM Multiscale Modeling and Simulation
12,
1722--1776, (2014).


\bibitem[Majda and Bertozzi(2002)Majda and Bertozzi]{mb2002} A. Majda and A. Bertozzi, Vorticity and Incompressible Flow, Cambridge University Press (2002).

\bibitem[Okamoto et~al.(2008)Okamoto, Sakajo, and Wunsch]{osw2008} H. Okamoto, T. Sakajo, and M. Wunsch, ``On a generalization of the Constantin-Lax-Majda equation'', Nonlinearity 21, 2447--2461 (2008).

\bibitem[Prandtl(1952)Prandtl]{prandtl} L. Prandtl, Essentials of Fluid Dynamics, London (Blackie and Son) (1952).

\bibitem[Seregin(2002)Seregin]{seregin}
G.~A.~Seregin,
``Local regularity of suitable weak solutions to the Navier-Stokes equations near the boundary'',
J. Math. Fluid Mech. 4, 1--29 (2002).

\bibitem[Stein(1993)Stein]{Stein} E. M. Stein, Harmonic Analysis: Real-Variable Methods, Orthogonality, and Oscillatory Integrals, Princeton (Princetion University Press) (1993). 

\bibitem[Vlasov et~al.(1971)Vlasov, Petrishchev, and Talanov]{talanov1971}
    S.~N.~Vlasov, V.~A.~Petrishchev, and V.~I.~Talanov, ``Averaged description of wave beams in linear and nonlinear media (the method of moments)'', Radiophys. Quantum. El. (translated from Izvestiya Vysshikh Uchebnykh Zavedenii, Radiofizika) 14, 1062--1070 (1971).

\bibitem[Yudovich(1963)Yudovich]{yudovich1963}
 V.~I.~Yudovich, ``Non-stationary flow of an ideal incompressible liquid'', USSR Comput. Math. Math. Phys. 3, 1407--1456 (1963).



\end{thebibliography}
\end{document}